\newtheorem{theorem}{Theorem}[section]
\newtheorem{theorem/definition}{Theorem/Definition}[section]
\newtheorem{proposition}{Proposition}[section]
\newtheorem{lemma}{Lemma}[section]
\theoremstyle{remark}
\newtheorem{remark}{Remark}[section]
\theoremstyle{definition}
\begin{document}
\title[Curvature Estimates for four-dimensional gradient Steady Solitons]
{Curvature Estimates for four-dimensional gradient Steady Ricci Solitons}
\author{HUAI-DONG CAO AND XIN CUI}
\address{Department of Mathematics, University of Macau, Macao \& Lehigh University,
Bethlehem, PA 18015}
\email{huc2@lehigh.edu}

\address{Department of Mathematics\\ Lehigh University\\
Bethlehem, PA 18015}
\email{xic210@lehigh.edu}


\begin{abstract}
In this paper, we derive certain curvature estimates for 4-dimensional gradient steady Ricci solitons either with positive Ricci curvature or with scalar curvature decay $\lim_{x\to \infty} R(x)=0$.

\end{abstract}

\maketitle
\date{}

\section{The results}

A complete Riemannian metric $g_{ij}$ on a smooth manifold $M^n$
is called a {\it gradient steady Ricci soliton} if there exists
a smooth function $f$ on $M^n$ such that the Ricci tensor $R_{ij}$
of the metric $g_{ij}$ satisfies the equation
$$R_{ij}+\nabla_i\nabla_jf=0. \eqno(1.1)$$
The function $f$ is called a {\it potential function} of the gradient steady soliton.
Clearly, when $f$ is a constant the gradient steady Ricci soliton $(M^n, g_{ij}, f)$ is simply a
Ricci-flat manifold. Gradient steady solitons play an important role in
Hamilton's Ricci flow, as they correspond to translating
solutions, and often arise as Type II singularity models. Thus one is interested
in possibly classifying them or understanding their geometry.

It turns out that compact steady solitons must be Ricci-flat. In dimension $n=2$, Hamilton \cite{Ha88} discovered the
first example of a complete noncompact gradient steady
soliton on $\mathbb R^2$, called the {\it cigar soliton}, where the metric is given by
$$ ds^2=\frac{dx^2 +dy^2}{1+x^2+y^2}.$$
The cigar soliton has potential function $f=-\log (1+x^2+y^2)$, positive curvature $R=4e^{f}$, and is asymptotic to cylinder at infinity.  Furthermore, Hamilton \cite{Ha88} showed that the only
complete steady soliton on a two-dimensional manifold with
bounded (scalar) curvature $R$ which assumes its maximum
at an origin is, up to scaling,  the cigar soliton. For $n\geq 3$, Bryant \cite{Bryant} proved that
there exists, up to scaling, a unique complete rotationally symmetric gradient Ricci
soliton on $\Bbb R^n$, see, e.g.,  Chow et al. \cite{Chow et al 1}
for details. The Bryant soliton has positive sectional curvature, linear curvature decay
and volume growth of geodesic balls $B(0,r)$ on the order of $r^{(n+1)/2}$. In the K\"ahler case,
Cao \cite{Cao94} constructed a complete $U(m)$-invariant gradient steady K\"ahler-Ricci soliton on $\mathbb{C}^m$, for $m\geq 2$, with positive sectional curvature. It has volume growth on the order of $r^{m}$ and also linear curvature decay. Note that in each of these three examples, maximum of the scalar curvature is attained at the origin. One can find additional examples of steady solitons, e.g., in \cite {Iv, FIK, DW1, DW2, BDGW} etc; see also \cite{Cao08b} and the references therein.

In dimension $n=3$, Perelman \cite{P2} claimed that the Bryant soliton is the
only complete noncompact, $\kappa$-noncollapsed, gradient steady
soliton with positive sectional curvature.  Recently, Brendle has affirmed this conjecture of Perelman
(see \cite{Brendle 1}; and also \cite{Brendle 2} for an extension to the higher dimensional case).
On the other hand, for $n\ge 4$,  Cao-Chen \cite{CaoChen} and Catino-Mantegazza \cite{CM} proved independently, and using different methods, that any $n$-dimensional  complete noncompact locally conformally flat gradient steady Ricci soliton $(M^n, g_{ij}, f)$ is either flat or isometric to the Bryant soliton (the method of
Cao-Chen \cite{CaoChen} also applies to the case of dimension $n=3$). In addition, Bach-flat gradient steady solitons (with positive Ricci curvature) for all $n\ge 3$ \cite{Cao et al} and half-conformally
flat ones for $n=4$ \cite{CW} have been classified respectively.

Inspired by the very recent work of Munteanu-Wang \cite{MW}, in this paper we study curvature estimates of 4-dimensional complete noncompact gradient steady solitons.
In \cite{MW}, Munteanu and Wang made an important observation that the curvature tensor of a four-dimensional gradient Ricci soliton $(M^4, g_{ij}, f)$ can be estimated in terms of the potential function $f$, the Ricci tensor and its first derivates. In addition, the (optimal) asymptotic quadratic growth property of the potential function $f$ proved in \cite{CaoZhou}, as well as a key scalar curvature lower bound $R\ge c/f$ shown in \cite{CLY} are crucial in their work. While gradient steady Ricci solitons in general don't share these two special features (cf. \cite{Wu, MS, WW} and \cite{CLY, FG}), some of the arguments in \cite{MW} can still be adapted to prove certain curvature estimates for two classes of gradient steady solitons.
Our main results are:

\begin{theorem} Let $(M^4, g_{ij}, f)$ be a complete noncompact $4$-dimensional
gradient steady Ricci soliton with positive Ricci curvature $Ric>0$ such that the scalar curvature $R$ attains its maximum
at some point $x_0\in M^4$. Then, $(M^4, g_{ij})$ has bounded Riemann curvature tensor, i.e.
$$ \sup_{x\in M} |Rm|  \le C $$
for some constant $C>0$. Suppose in addition $R$ has at most linear decay, then $$ \sup_{x\in M}\frac{|Rm|}{R}  \le C.$$

\end{theorem}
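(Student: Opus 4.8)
The plan is to renormalize, reduce everything to a bound on the Weyl tensor, and then run a Munteanu--Wang type estimate for the Weyl tensor; the main work is in the last step.

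\emph{Normalization and pointwise bounds.} After scaling assume $\max_M R=1$. At the maximum point $x_0$ one has $\nabla R=0$, and the soliton identity $\nabla R=2\,Ric(\nabla f,\cdot)$ together with $Ric>0$ forces $\nabla f(x_0)=0$; since $R+|\nabla f|^2$ is constant it therefore equals $1$. Hence $0<R\le 1$, $|\nabla f|^2=1-R<1$, and since $\tr Ric=R$ with $Ric>0$ one has $0<Ric\le R\,g$, so $|Ric|\le R\le 1$, $|\nabla R|=2|Ric(\nabla f)|\le 2R$, and (differentiating (1.1) and using the second Bianchi identity, $\nabla_iR_{jk}-\nabla_jR_{ik}=R_{ijkl}\nabla^l f$) $|\nabla Ric|\le c|Rm|$. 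Since $\nabla^2 f=-Ric<0$, $f$ is strictly concave with $x_0$ its unique critical point; normalize $f(x_0)=0$, so $f\le 0$, and for the drift Laplacian $\Delta_f:=\Delta-\langle\nabla f,\nabla\cdot\rangle$ one gets $\Delta_f(-f)=R+|\nabla f|^2=1$. One also records --- and this needs checking, from $Ric>0$ and the fact that $x_0$ is the only critical point --- that $f$ is proper, so that $-f$ is an exhaustion function.

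\emph{Reduction to the Weyl tensor and the structural inequality.} In dimension four the curvature operator splits orthogonally into scalar, traceless-Ricci, and Weyl pieces, giving the pointwise identity $|Rm|^2=|W|^2+2|E|^2+\tfrac16 R^2$ with $E:=Ric-\tfrac14 R\,g$ (standard norm conventions), and $|E|\le R$ by the previous step; thus $\sup_M|W^\pm|\le C$ forces $\sup_M|Rm|\le C'$, and $\sup_M|W^\pm|/R\le C$ forces $\sup_M|Rm|/R\le C'$, so it suffices to bound $W^\pm$. Now $\div W$ is a constant multiple of the Cotton tensor, which on a soliton equals $Rm(\nabla f)$ up to terms built from $\nabla R$ and $Ric$; since $Rm$ differs from $W$ only by the bounded Ricci pieces and $|\nabla f|,|Ric|\le 1$, this gives $|\div W|\le c|W|+c$. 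Feeding this into the Weitzenböck formula for $W^\pm$ on a gradient Ricci soliton --- equivalently the evolution identity $\Delta_f Rm=Rm*Rm$ projected onto the $W^\pm$ components, whose remaining lower-order terms involve only the bounded tensor $Ric$ --- and using Kato's inequality together with $|W^\pm*W^\pm*W^\pm|\le c|W^\pm|^3$, one obtains, wherever $|W^\pm|>0$,
$$\Delta_f|W^\pm|\ \ge\ -c\,|W^\pm|^2-c .$$

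\emph{Closing the estimate --- the main obstacle.} The crux is to turn this quadratic differential inequality into an a priori bound. For shrinkers \cite{MW} uses $f\asymp r^2/4$ and the scalar lower bound $R\ge c/f$; here these are unavailable, and the substitutes are $Ric>0$, the uniform bounds $R\le 1$, $|Ric|\le 1$, $|\nabla f|\le 1$, and $\Delta_f(-f)=1$. Adapting the strategy of \cite{MW}, one integrates the Bochner and Bianchi identities above against the weight $e^{-f}$ with cutoffs supported on the sublevel sets $\{-f\le t\}$; boundedness of $Ric$, $R$, and $\nabla f$ lets the error terms be absorbed, yielding weighted integral bounds $\int_{\{-f\le t\}}|W^\pm|^2 e^{-f}\le \Phi(t)$ with a controlled $\Phi$, and similarly for $|\nabla Rm|$. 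One then runs De Giorgi--Nash--Moser iteration for $\Delta_f|W^\pm|\ge -c|W^\pm|^2-c$ on $f$-distance balls to upgrade these to the pointwise bound $\sup_M|W^\pm|\le C$, whence $\sup_M|Rm|\le C$. A more hands-on alternative is a maximum-principle argument: test the inequality against an auxiliary function $|W^\pm|\,\varphi(-f)$ with $\varphi$ a suitably chosen concave function, and use $\Delta_f(-f)=1$ together with $Ric>0$ to give $\Delta_f$ of the product a definite sign at an interior maximum. Making each link of this chain quantitative without the shrinker-specific inputs --- and verifying properness of $f$ --- is where the real difficulty lies.

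\emph{The refined estimate.} If $R$ has at most linear decay then $R(x)\ge c\,(1+d(x,x_0))^{-1}\ge c'\,(1-f(x))^{-1}$, the second inequality because $|\nabla f|\le 1$ forces $-f\le d(\cdot,x_0)$. One repeats the previous step for the scale-invariant quantity $|W^\pm|/R$: since $\Delta_f R=-2|Ric|^2$, a direct computation produces a differential inequality for $|W^\pm|/R$ of the same type, and because $R^{-1}$ grows at most linearly in $-f$ while $\Delta_f(-f)=1$, the extra terms are again absorbed by the weighted-integral-plus-iteration (or barrier) argument. This yields $\sup_M|W^\pm|/R\le C$, hence $\sup_M|Rm|/R\le C$ by the reduction step.
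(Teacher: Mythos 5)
Your reduction to the Weyl tensor and the collected pointwise bounds ($0<|Ric|\le R\le 1$, $|\nabla f|\le 1$, $\Delta_f(-f)=1$) are fine, but the proof has a genuine gap exactly where you admit ``the real difficulty lies'': nothing in your argument can close the differential inequality $\Delta_f|W^\pm|\ge -c|W^\pm|^2-c$. The quadratic term there has the \emph{wrong} sign for any maximum-principle or De Giorgi--Nash--Moser conclusion: a function satisfying $\Delta_f u\ge -cu^2-c$ can perfectly well be unbounded, and at an interior maximum of $|W^\pm|\varphi(-f)$ the inequality gives you nothing, no matter how you choose the concave $\varphi$. Likewise the proposed weighted estimates $\int_{\{-f\le t\}}|W^\pm|^2e^{-f}$ are not derived and are not a plausible substitute: with your normalization $f\le 0$ the weight $e^{-f}=e^{|f|}$ grows exponentially on a steady soliton (for shrinkers $e^{-f}$ is a Gaussian weight, which is what makes the Munteanu--Wang integral estimates work there), so boundedness of $Ric$, $R$, $\nabla f$ does not let you ``absorb the error terms.'' Also, your auxiliary claim $|\nabla Ric|\le c|Rm|$ does not follow from the contracted second Bianchi/soliton identity, which only controls the antisymmetrized part $\nabla_iR_{jk}-\nabla_jR_{ik}=R_{ijkl}\nabla^l f$; and the properness of $-f$ and the lower bound $|\nabla f|\ge c_1>0$, which you flag but do not prove, are themselves nontrivial inputs (they are Cao--Chen's results, quoted in the paper as (2.7) and (3.1)).

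What is missing is the mechanism the paper actually uses to produce a \emph{good} quadratic term. The key is the Munteanu--Wang pointwise inequality, valid for any 4D steady soliton,
$$|Rm|\le c\left(\frac{|\nabla Ric|}{|\nabla f|}+\frac{|Ric|^2}{|\nabla f|^2}+|Ric|\right),$$
which combined with $|\nabla f|^2\ge c_1>0$ (positive Ricci plus $R$ attaining its maximum) gives $|\nabla Ric|^2\ge \frac{1}{2C^2}|Rm|^2-4$. One then works with $u=|Rm|+\lambda|Ric|^2$ for $\lambda$ large: the term $+2\lambda|\nabla Ric|^2$ coming from $\Delta_f|Ric|^2$ dominates the bad $-c|Rm|^2$ term and yields $\Delta_f u\ge u^2-C$, which \emph{does} close under the maximum principle with a cutoff $\varphi(F)$, $F=-f$, using that $F$ is comparable to the distance function. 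The second statement is handled the same way for $u=(|Rm|+\lambda|Ric|^2)/R$, using $|\nabla\log R|\le 2$ (from $\nabla R=2Ric(\nabla f)$, $Ric>0$) and the linear decay hypothesis to absorb the cutoff errors; your sketch of this part inherits the same unclosed estimate. So the decomposition into Weyl plus Ricci pieces is harmless, but without the $|\nabla Ric|$-versus-$|Rm|$ inequality (or some equivalent source of a positive $u^2$ term) the argument does not go through.
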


\begin{theorem} Let $(M^4, g_{ij}, f)$, which is not Ricci-flat,  be a  complete noncompact $4$-dimensional
gradient steady Ricci soliton. If $\lim_{x\to \infty} R(x)=0$, then, for each $0<a<1$, there exists
a constant $C>0$ such that
$$  |Ric|^2\le C {R^a} \qquad  \mbox{and} \qquad \sup_{x\in M} |Rm| \le C.$$
Suppose in addition $R$ has at most polynomial decay. Then, for each $0<a<1$, there exists a constant $C>0$ such that
$$  |Rm|^2\le C R^{a}.$$

\end{theorem}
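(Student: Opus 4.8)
The plan is to prove the two inequalities of Theorem 1.2 in sequence, using the Munteanu–Wang-type curvature estimate for four-dimensional Ricci solitons together with standard steady soliton identities: $R + |\nabla f|^2 = c$ (a constant, which we normalize to $c = \sup R$ since $R \to 0$ at infinity forces $|\nabla f|^2 \to c$), $\nabla R = 2\,\mathrm{Ric}(\nabla f)$, and $\Delta_f R = -2|\mathrm{Ric}|^2$ where $\Delta_f = \Delta - \nabla f \cdot \nabla$.

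\medskip

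\textbf{Step 1: The bound $|\mathrm{Ric}|^2 \le C R^a$.} The idea is to run a maximum-principle argument on the quantity $u = |\mathrm{Ric}|^2 / R^a$ (or, to be safe about where $R$ vanishes, on $|\mathrm{Ric}|^2 / (R+\varepsilon)^a$ and let $\varepsilon \to 0$). Since $R \to 0$ at infinity and $(M^4,g)$ is not Ricci-flat, $R > 0$ everywhere (by the strong maximum principle applied to $\Delta_f R = -2|\mathrm{Ric}|^2 \le 0$, so $R$ cannot attain an interior minimum of $0$; and $R \to 0$ means $\inf R = 0$ is not attained). I would compute $\Delta_f u$ using the Bochner-type formula for $|\mathrm{Ric}|^2$ on a steady soliton, namely $\Delta_f |\mathrm{Ric}|^2 = 2|\nabla \mathrm{Ric}|^2 - 4\,R_{ikjl}R_{ij}R_{kl} + \text{(lower order in }\mathrm{Ric})$, combined with $\Delta_f R = -2|\mathrm{Ric}|^2$ to handle the $R^{-a}$ factor. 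The curvature term $R_{ikjl}R_{ij}R_{kl}$ is controlled by $|\mathrm{Rm}||\mathrm{Ric}|^2$, and here is where the four-dimensional Munteanu–Wang estimate enters: it bounds $|\mathrm{Rm}|$ in terms of $f$, $|\mathrm{Ric}|$, and $|\nabla \mathrm{Ric}|$, so that $|\mathrm{Rm}||\mathrm{Ric}|^2$ can be absorbed into the good gradient term $|\nabla \mathrm{Ric}|^2$ plus a term of the form $(\text{const})\,|\mathrm{Ric}|^3$. Since $|\mathrm{Ric}|^2 \le 4 R^2 \to 0$, we have $|\mathrm{Ric}| \le \text{small}$ at infinity, so $|\mathrm{Ric}|^3 = |\mathrm{Ric}|\cdot|\mathrm{Ric}|^2$ is itself small times $|\mathrm{Ric}|^2$; this is the mechanism that, for $a < 1$, makes $u$ satisfy $\Delta_f u \ge -C + (\text{good terms})$ near infinity, or better, forces any interior maximum of $u$ to be bounded. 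The precise algebra — choosing how much of $|\nabla \mathrm{Ric}|^2$ to spend and verifying the sign works for every $a \in (0,1)$ but breaks at $a = 1$ — is the routine-but-delicate part.

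\medskip

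\textbf{Step 2: $\sup |\mathrm{Rm}| \le C$.} This is now almost immediate: the Munteanu–Wang estimate gives $|\mathrm{Rm}| \le C(f^2|\mathrm{Ric}| + \dots)$ or similar, but a cleaner route is that Step 1 plus the soliton identities bound $|\nabla \mathrm{Ric}|$ and hence feed back into the four-dimensional estimate to give $|\mathrm{Rm}| \le C(|\mathrm{Ric}| + |\nabla\mathrm{Ric}| + \dots) \le C$. Alternatively, once $|\mathrm{Ric}|$ and $|\nabla\mathrm{Ric}|$ are controlled, a further Shi-type or maximum-principle argument on $|\mathrm{Rm}|^2$ using $\Delta_f|\mathrm{Rm}|^2 = 2|\nabla\mathrm{Rm}|^2 + (\text{cubic in Rm})$ together with boundedness at infinity (which follows since $|\mathrm{Rm}| \le C R^a \to 0$ crudely, or from the estimate) closes it. I expect this step to be short.

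\medskip

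\textbf{Step 3: $|\mathrm{Rm}|^2 \le C R^a$ under polynomial decay.} With $|\mathrm{Rm}|$ now known to be bounded (Step 2) and $R$ decaying polynomially, one upgrades Step 1's argument to the full curvature tensor. I would study $v = |\mathrm{Rm}|^2/(R+\varepsilon)^a$ and compute $\Delta_f v$. The evolution $\Delta_f |\mathrm{Rm}|^2 = 2|\nabla \mathrm{Rm}|^2 + \mathrm{Rm}*\mathrm{Rm}*\mathrm{Rm}$; the cubic term is bounded by $|\mathrm{Rm}|^3 \le C|\mathrm{Rm}|^2$ (using Step 2), which competes against $-a|\mathrm{Rm}|^2 \Delta_f R / (R+\varepsilon) = 2a|\mathrm{Rm}|^2|\mathrm{Ric}|^2/(R+\varepsilon)$ coming from differentiating the denominator — and crucially against the cross term $-2a\nabla|\mathrm{Rm}|^2 \cdot \nabla R/(R+\varepsilon)$, which is where $\nabla R = 2\mathrm{Ric}(\nabla f)$ and the polynomial decay hypothesis (controlling $|\nabla R|/R$, equivalently $|\mathrm{Ric}||\nabla f|/R$) are used. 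The polynomial-decay assumption is exactly what makes the first-order term manageable via Cauchy–Schwarz against the good term $|\nabla\mathrm{Rm}|^2$. Applying the maximum principle to $v$ (with a cutoff or a direct argument using that $v \to 0$ at infinity when the decay rate beats $a$, handling borderline cases by the $\varepsilon$-regularization) yields the bound.

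\medskip

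\textbf{Main obstacle.} The delicate point throughout is the interplay between the negative "drift" terms produced by the weighted denominator $(R+\varepsilon)^{-a}$ — which are good (they have the right sign) only because $\Delta_f R \le 0$ — and the first-order cross terms $\nabla(\cdot)\cdot\nabla R$, which have no sign and must be absorbed into $|\nabla\mathrm{Rm}|^2$ (or $|\nabla\mathrm{Ric}|^2$) with a coefficient that leaves enough of the good term to also dominate the cubic curvature term. Getting all three of these to balance simultaneously for the full range $a \in (0,1)$, and seeing precisely why $a = 1$ is excluded (the cubic term is then only borderline-controlled, with no slack from $|\mathrm{Ric}| \to 0$), is the heart of the proof; the four-dimensional Munteanu–Wang curvature estimate is the essential input that bootstraps $|\mathrm{Rm}|$-control out of Ricci-control to begin with.
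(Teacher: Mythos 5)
Your overall scheme (maximum principle on the weighted quantities $|\mathrm{Ric}|^2/R^a$ and $(|\mathrm{Rm}|^2+\lambda|\mathrm{Ric}|^2)/R^a$, powered by the four-dimensional Munteanu--Wang estimate, $\Delta_f R=-2|\mathrm{Ric}|^2$, and Cauchy--Schwarz absorption of the $\nabla R$ cross terms) is the paper's scheme, but several of your key mechanisms are wrong. First, the inequality ``$|\mathrm{Ric}|^2\le 4R^2$'' requires $\mathrm{Ric}\ge 0$, which is \emph{not} a hypothesis of Theorem 1.2; without it there is no a priori pointwise bound of $|\mathrm{Ric}|$ by $R$, and your control of the cubic term via ``$|\mathrm{Ric}|$ is small at infinity'' is circular, since smallness of $|\mathrm{Ric}|$ is essentially what is being proved. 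The correct mechanism is structural: differentiating $R^{-a}$ produces the good term $2a\,|\mathrm{Ric}|^4/R^{a+1}=2a\,u^2R^{a-1}$, and since $R\le 1$ after normalization, $R^{a-1}$ is bounded below; at a maximum point the resulting inequality $a\,u^2R^{a-1}\le C u^{3/2}R^{a/2}+Cu$ gives $u\le C$ with no smallness of $\mathrm{Ric}$ at all. The hypothesis $R\to 0$ enters only to guarantee $|\nabla f|\ge c_1>0$ outside a compact set (so the Munteanu--Wang bound has uniform constants) and to make the coefficient $2a-CR/(1-a)$ positive there. Second, you never address how the maximum principle is localized on a noncompact manifold: the potential need not be proper, so the paper cuts off by the distance function and needs the Perelman-type comparison $\Delta_f r\le C$ (Lemma 4.1); assertions like ``forces any interior maximum of $u$ to be bounded'' or ``$v\to 0$ at infinity'' are unjustified --- indeed $|\mathrm{Rm}|\le C$ together with $R\to 0$ makes $|\mathrm{Rm}|^2/R^a$ a priori potentially unbounded. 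Also, in your Step 2, Step 1 does \emph{not} bound $|\nabla \mathrm{Ric}|$; boundedness of $|\mathrm{Rm}|$ is obtained by rerunning the Theorem 3.1-type maximum principle on $|\mathrm{Rm}|+\lambda|\mathrm{Ric}|^2$ (now with the distance cutoff), not by feeding a $\nabla\mathrm{Ric}$ bound back into the Munteanu--Wang inequality, and certainly not from ``$|\mathrm{Rm}|\le CR^a$ crudely,'' which is not yet known.

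The most serious gap is Step 3. Polynomial decay is not what makes the first-order cross term absorbable: that absorption is unconditional, because the positive term $a(a+1)|\nabla R|^2/R^{a+2}$ produced by $\Delta_f(R^{-a})$ exactly pays for the Cauchy--Schwarz loss, just as in Step 1. What polynomial decay actually buys is the ability to close a maximum-principle argument for a quantity satisfying only a \emph{linear} inequality $\Delta_f v\ge \mu v-D$ (the quadratic structure that saved Step 1 is gone once $|\mathrm{Rm}|$ is merely bounded). The paper's device is a cutoff tailored to the decay exponent: with $R\ge c\,r^{-k}$, take $\varphi=\bigl((d-r)/d\bigr)^{k/2}$; at a maximum point $q$ of $\varphi^2(v-D/\mu)$ outside a fixed ball, the $\Delta_f r$ comparison forces $d-r(q)\le c$, and then $\varphi^2(q)\le C d^{-k}$ beats $R^{-a}(q)\le C r(q)^{ak}\le C d^{ak}$ precisely because $a<1$, yielding the bound for every $a\in(0,1)$ and every $k$, with no relation between them. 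Your fallback ``when the decay rate beats $a$'' both misstates the hypothesis and would not give the full statement; without this tailored cutoff (or some substitute playing the same role), your Step 3 does not close.
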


\begin{remark}
We have learned that the first part of Theorem 1.2 was also known to O. Munteanu and J. Wang.
\end{remark}

\medskip \noindent {\bf Acknowledgments.} We are grateful to Ovidiu Munteanu and Jiaping Wang for sending us their paper \cite{MW}, and its early version in July 2014,  which motivated us to consider curvature
estimates for 4D steady solitons.  The first author also would like to thank Ovidiu Munteanu for very helpful discussions; part of the work was carried out when he was visiting University of Macau, partially support by FDCT/016/2013/A1 and RDG010, during summer 2014.

\section{Preliminaries}

In this section, we recall some basic facts and collect several known results about gradient steady
solitons. Throughout the rest of the paper, we denote by $$Rm=\{R_{ijkl}\}, \quad Ric=\{R_{ik}\},\quad R $$ the Riemann curvature tensor, the Ricci tensor, and the scalar curvature of the metric $g_{ij}$ respectively.

\begin{lemma} {\bf (Hamilton \cite{Ha95F})} Let $(M^n, g_{ij}, f)$
be a complete gradient stead soliton satisfying Eq. (1.1).
Then
$$ R=-\Delta f, \eqno (2.1)$$
$$\nabla_iR=2R_{ij}\nabla_jf, \eqno(2.2)$$
$$R+|\nabla f|^2=C_0 \eqno(2.3)$$ for some constant $C_0$.
\end{lemma}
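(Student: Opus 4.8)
The three identities all follow from the defining equation $R_{ij}+\nabla_i\nabla_jf=0$ together with contracted second Bianchi identity and the commutation of covariant derivatives. I would prove them in the order $(2.1)$, $(2.2)$, $(2.3)$, since each uses the previous.

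For $(2.1)$, take the trace of $(1.1)$ with respect to $g^{ij}$: $g^{ij}R_{ij}+g^{ij}\nabla_i\nabla_jf=0$, i.e. $R+\Delta f=0$, which is exactly $R=-\Delta f$.

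For $(2.2)$, the idea is to differentiate $(1.1)$ and commute derivatives. Starting from $\nabla_k R_{ij}=-\nabla_k\nabla_i\nabla_jf$, I would use the Ricci identity to write $\nabla_k\nabla_i\nabla_jf=\nabla_i\nabla_k\nabla_jf-R_{kij}{}^l\nabla_lf$. Tracing appropriately and invoking the contracted second Bianchi identity $g^{jk}\nabla_jR_{ik}=\tfrac12\nabla_iR$, together with the fact that $g^{jk}R_{kij}{}^l=-R_i{}^l$, one collects terms to obtain $\tfrac12\nabla_iR=\nabla_i(\Delta f)+R_{ij}\nabla_jf = -\nabla_iR+R_{ij}\nabla_jf$ after using $(2.1)$; rearranging gives $\nabla_iR=\tfrac{2}{3}\cdot\ldots$ — so one must be careful with the bookkeeping, but the standard computation yields exactly $\nabla_iR=2R_{ij}\nabla_jf$. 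This algebraic bookkeeping with the Bianchi identity is the one place a sign or coefficient slip can occur, so it is the step I would write most carefully.

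For $(2.3)$, I would show $\nabla_i(R+|\nabla f|^2)=0$. Compute $\nabla_i|\nabla f|^2=2\nabla_jf\,\nabla_i\nabla_jf=-2\nabla_jf\,R_{ij}$ using $(1.1)$, and $\nabla_iR=2R_{ij}\nabla_jf$ by $(2.2)$; these cancel, so $R+|\nabla f|^2$ is locally constant, hence equal to a constant $C_0$ on the connected manifold $M^n$. Since each identity reduces to a short tensor calculation, the only genuine obstacle is the careful tracking of curvature conventions and Bianchi coefficients in the derivation of $(2.2)$.
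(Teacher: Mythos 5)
The paper states this lemma without proof, citing Hamilton, and your plan is exactly the standard (intended) derivation: trace (1.1) for (2.1), take a divergence and use the contracted second Bianchi identity for (2.2), and show $\nabla_i\bigl(R+|\nabla f|^2\bigr)=0$ for (2.3). The one place your bookkeeping genuinely goes wrong is the intermediate display for (2.2): from $R_{ij}=-\nabla_i\nabla_jf$ the divergence is $\nabla^jR_{ij}=-\nabla^j\nabla_i\nabla_jf$, and commuting derivatives gives $\nabla^j\nabla_i\nabla_jf=\nabla_i\Delta f+R_i{}^l\nabla_lf$, so the contracted Bianchi identity reads $\tfrac12\nabla_iR=-\nabla_i\Delta f-R_{ij}\nabla_jf=\nabla_iR-R_{ij}\nabla_jf$ after using (2.1), whence $R_{ij}\nabla_jf=\tfrac12\nabla_iR$, which is precisely (2.2). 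Your version, $\tfrac12\nabla_iR=\nabla_i(\Delta f)+R_{ij}\nabla_jf=-\nabla_iR+R_{ij}\nabla_jf$, has dropped the minus sign coming from (1.1) when passing from $\nabla^j\nabla_i\nabla_jf$ to $\nabla^jR_{ij}$, and as you noticed it would produce the impossible coefficient $2/3$; restoring that sign closes the argument with the correct factor $2$, and no appeal to ``the standard computation'' is then needed. Parts (2.1) and (2.3) are fine as written: in particular $\nabla_i|\nabla f|^2=2\nabla_jf\,\nabla_i\nabla_jf=-2R_{ij}\nabla_jf$ cancels $\nabla_iR=2R_{ij}\nabla_jf$, so $R+|\nabla f|^2$ is constant on the connected manifold $M^n$.
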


Next we need the following useful result by B.-L. Chen \cite{BChen}.

\begin{proposition}  Let $g_{ij}(t)$ be a complete ancient
solution to the Ricci flow on a noncompact manifold $M^n$. Then the scalar curvature
$R$ of $g_{ij}(t)$ is nonnegative for all $t$.
\end{proposition}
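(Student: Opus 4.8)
The plan is to exploit the reaction--diffusion structure of the scalar curvature under the Ricci flow together with an ODE comparison, and then to let the initial time recede to $-\infty$. Under $\partial_t g_{ij} = -2R_{ij}$ the scalar curvature satisfies
$$\frac{\partial R}{\partial t} = \Delta R + 2|Ric|^2,$$
and since $R = \tr(Ric)$, Cauchy--Schwarz gives $|Ric|^2 \ge \tfrac1n R^2$; thus $R$ is a supersolution of $\partial_t R \ge \Delta R + \tfrac2n R^2$. The spatially constant function $\phi(t) = -\tfrac{n}{2(t-t_0)}$ solves $\phi' = \tfrac2n\phi^2$ and $\phi(t) \to -\infty$ as $t \downarrow t_0$, so it serves as a lower barrier. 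If the flow is defined for $t \in [t_0, T]$ and a suitable maximum principle applies, one obtains $R(x,t) \ge -\tfrac{n}{2(t-t_0)}$ for all $x \in M$ and $t \in (t_0, T]$. For an ancient solution, fix $t$ in the (past-unbounded) time interval and let $t_0 \to -\infty$; this forces $R(x,t) \ge 0$, and since $t$ is arbitrary the proposition follows.

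The entire difficulty --- and the point of B.-L.\ Chen's argument --- is that $M$ is noncompact and a priori neither $R$ nor $|Rm|$ is assumed bounded, so the compact maximum principle, and its usual noncompact version which presupposes a lower curvature bound, cannot be invoked for the step ``$R$ a supersolution $\Rightarrow R \ge \phi$'' above. One must instead prove a \emph{localized} maximum principle. Fixing $p \in M$ and a large radius $r$, the idea is to apply the maximum principle on the (time-dependent) region $\{d_{g(t)}(\cdot,p) < r\}$ to a perturbation of $R + \tfrac{n}{2(t-t_0)}$ by a barrier term built from the distance function $d_{g(t)}(\cdot,p)$ which tends to $+\infty$ as $d_{g(t)}(\cdot,p) \to r$, so that a hypothetical negative infimum of the perturbed quantity must be attained at an interior space-time point. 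At such a point the contributions of $\Delta$ and $\partial_t$ acting on the barrier must be dominated by the favorable reaction term $\tfrac2n R^2$; controlling them requires the Laplacian comparison for the distance function and an estimate for the distortion of $d_{g(t)}(\cdot,p)$ under the flow. Letting $r \to \infty$ and then removing the perturbation yields $R(x,t) \ge -\tfrac{n}{2(t-t_0)}$ globally, after which $t_0 \to -\infty$ finishes the argument.

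I expect this localization to be the main obstacle. A naive spatial cutoff does not suffice: since the curvature is a priori unbounded and the metric evolves, the error terms $|\nabla(\text{cutoff})|^2$, $\Delta(\text{cutoff})$ and $\partial_t(\text{cutoff})$ cannot be handled without first controlling how distances distort along the flow, and the radius and shape of the cutoff must be chosen so that these errors are absorbed by $\tfrac2n R^2$ uniformly as $r \to \infty$; there is also a subtlety near $t = t_0$, where the barrier $\tfrac{n}{2(t-t_0)}$ is precisely what allows one to dispense with control on $R$ at the initial time. Once the localized differential inequality is established, the ODE comparison with $\phi(t) = -\tfrac{n}{2(t-t_0)}$ and the passage $t_0 \to -\infty$ are routine.
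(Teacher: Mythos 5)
The paper offers no proof of this proposition: it is quoted verbatim from B.-L. Chen's strong uniqueness paper \cite{BChen}, so the comparison is really with Chen's argument. Your global skeleton is the right one --- $\partial_t R \ge \Delta R + \tfrac2n R^2$, comparison with the ODE barrier $-\tfrac{n}{2(t-t_0)}$, then $t_0\to-\infty$ --- and you correctly identify that the entire content of the theorem is the maximum-principle step on a noncompact manifold with \emph{no} curvature or completeness-of-curvature hypotheses. But that step is exactly what you do not supply: what you give is a plan for a localized maximum principle, flagged by yourself as ``the main obstacle,'' not a proof of it. As it stands the argument establishes nothing beyond the classical case (compact, or complete with bounded curvature, where Hamilton's maximum principle already applies).

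Moreover, the localization you sketch has a genuine circularity. The two tools you name for controlling the barrier built from $d_{g(t)}(\cdot,p)$ --- the Laplacian comparison for the distance function and the Perelman-type estimate for $(\partial_t-\Delta)\,d_{g(t)}$ (the analogue of Lemma 4.1 in this paper) --- both presuppose pointwise Ricci bounds: a lower Ricci bound for $\Delta d$, and an upper bound $Ric\le (n-1)K$ near the endpoints for the distortion estimate. For an arbitrary ancient solution no such bounds are available; avoiding them is precisely the point of Chen's theorem, and they cannot be recovered from the scalar curvature alone, so the favorable term $\tfrac2n R^2$ at a negative minimum does not by itself absorb these errors. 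Chen's actual proof proceeds through a more delicate local lower bound for $R$ whose derivation is structured quite differently from a single global distance-function barrier. To make your write-up complete you would either have to reproduce that local estimate in full or simply cite \cite{BChen}, which is all the paper itself does.
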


As an immediate corollary, we have

\begin{lemma}
Let $(M^n, g_{ij}, f)$ be a complete gradient steady soliton. Then it has nonnegative
scalar curvature $R\ge 0$.
\end{lemma}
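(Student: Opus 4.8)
The plan is to exhibit $(M^n,g_{ij})$ as the time-zero slice of an ancient (in fact eternal) solution to the Ricci flow and then invoke Proposition 2.2. First I would recall the standard fact that a gradient steady soliton is self-similar: letting $\phi_t$ denote the one-parameter family of diffeomorphisms generated by the vector field $\nabla f$, one checks that $g(t):=\phi_t^*g$ satisfies $\partial_t g(t)=\phi_t^*(\mathcal L_{\nabla f}g)=2\hh\phi_t^*(\nabla^2 f)=-2\hh\phi_t^*(Ric_g)=-2\hh Ric_{g(t)}$, using Eq.~(1.1) in the middle step; here $\phi_0=\mathrm{id}$, so $g(0)=g$.

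Next I would verify that this solution is defined for all $t\in(-\infty,\infty)$. The only point to check is that $\nabla f$ is a complete vector field, which follows immediately from Eq.~(2.3) of Lemma 2.1: $|\nabla f|^2 = C_0 - R \le C_0$ (recall $R\ge 0$ is exactly what we are proving, but for completeness of the flow one only needs the upper bound $|\nabla f|^2\le C_0$, and $C_0\ge 0$ since $R\ge0$ is not yet available — instead note $C_0 = R+|\nabla f|^2$ is finite and $|\nabla f|^2 \le C_0$ regardless of sign, which is all that is needed to bound integral curves on compact time intervals). Hence $g(t)$ is an eternal solution, in particular an ancient solution on $(-\infty,0]$.

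If $M$ is noncompact, Proposition 2.2 applies directly to $g(t)$ and gives $R_{g(t)}\ge0$ for all $t$; evaluating at $t=0$ yields $R=R_{g(0)}\ge0$. If $M$ is compact, one instead argues by the maximum principle: along the Ricci flow $\partial_t R=\Delta R+2|Ric|^2\ge \Delta R+\tfrac{2}{n}R^2$, so $\rho(t):=\min_M R_{g(t)}$ satisfies $\rho'(t)\ge\tfrac{2}{n}\rho(t)^2$ in the barrier sense; were $\rho(0)<0$, comparison with the ODE $y'=\tfrac2n y^2$ would force $\rho(t)\to-\infty$ in finite backward time, contradicting the existence of the solution for all $t\le0$. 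Hence again $R\ge0$. (Alternatively, for compact $M$ one may simply cite that compact steady solitons are Ricci-flat, whence $R=0$.)

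There is essentially no obstacle here — the content is entirely contained in Proposition 2.2, and the lemma is genuinely a corollary; the only mild care needed is the completeness of $\nabla f$ so that the associated self-similar solution is a bona fide ancient solution to which Chen's theorem applies.
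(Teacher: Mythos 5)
Your overall route is exactly the paper's intended one: the lemma is stated there as an immediate corollary of Proposition 2.2, the point being that a complete gradient steady soliton generates a self-similar (eternal, hence ancient) solution $g(t)=\phi_t^*g$ via the flow of $\nabla f$, to which B.-L. Chen's theorem applies; your Lie-derivative computation and the reduction of the compact case (maximum principle, or simply the fact that compact steady solitons are Ricci-flat) are fine.

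However, the one technical point you single out --- completeness of the vector field $\nabla f$ --- is exactly where your argument has a genuine gap, and as written it is circular. From (2.3) one has $|\nabla f|^2 = C_0 - R$ pointwise, so the bound $|\nabla f|^2\le C_0$ is \emph{equivalent} to $R\ge 0$, which is the statement being proved; it is not true ``regardless of sign'' of $R$, and without any a priori lower bound on $R$ the identity (2.3) gives no upper bound on $|\nabla f|$ at all. On a complete noncompact manifold a gradient field with unbounded norm can fail to be complete (integral curves may escape to infinity in finite time), so some independent control is genuinely needed before you may speak of the eternal solution $\phi_t^*g$. The standard fix is to invoke Z.-H. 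Zhang's theorem (\emph{On the completeness of gradient Ricci solitons}, Proc.\ Amer.\ Math.\ Soc.\ 2009), which shows, with no curvature sign assumptions, that $|\nabla f|$ grows at most linearly in the distance and hence $\nabla f$ is complete on any complete gradient Ricci soliton; with that citation in place of your (2.3)-based bound, the rest of your proof goes through and coincides with the paper's. (Alternatively, one can bypass the flow altogether and deduce $R\ge 0$ directly from $\Delta_f R=-2|Ric|^2$ by a maximum-principle-at-infinity argument, as in Pigola--Rimoldi--Setti, but that is a different route from the one the paper takes.)
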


\begin{remark}
In fact, by Proposition 3.2 in \cite {PeW},  either $R>0$ or $(M^n, g_{ij})$ is Ricci flat.
\end{remark}

It follows from Lemma 2.2 that the constant $C_0$ in (2.3) is positive whenever $f$ is a non-constant
function (i.e., the steady soliton is non-trivial). By a suitable scaling of the metric $g_{ij}$, we can normalize $C_0=1$ so that
$$R+|\nabla f|^2=1. \eqno(2.4)$$
In the rest of the paper, we shall always assume this normalization (2.4).

Combining  (2.1) and (2.4), we obtain $-\Delta f +|\nabla f|^2=1.$ Thus, setting $F=-f$, we have
$$\Delta_f F =1, \eqno(2.5)$$
where $$\Delta_f =:\Delta -\nabla f\cdot \nabla. \eqno (2.6)$$

For gradient steady solitons with positive Ricci curvature $Ric>0$, we have

\begin{proposition} Let $(M^n, g_{ij}, f)$ be a complete noncompact
gradient steady soliton with positive Ricci curvature $Ric>0$ such that the scalar curvature $R$ attains its maximum $R_{max}=1$ at some origin
$x_0\in M^n$. Then, there exist some constants $0<c_1\leq 1$ and $c_2>0$  such that $F=-f$ satisfies the estimates
$$c_1r(x)-c_2 \leq F(x) \leq r(x) + |F(x_0)|, \eqno(2.7)$$
where $r(x)=d(x_0, x)$ is the distance function from  $x_0$.
\end{proposition}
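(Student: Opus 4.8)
The plan is to read off the upper bound directly from the gradient identity (2.4), and to obtain the lower bound from the fact that, since $Ric>0$, the function $F=-f$ is strictly convex and has a critical point at $x_0$.

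For the upper bound: by Lemma 2.2 we have $R\ge 0$, so (2.4) gives $|\nabla F|^2=|\nabla f|^2=1-R\le 1$; thus $F$ is $1$-Lipschitz, and integrating $|\nabla F|\le 1$ along a minimizing geodesic from $x_0$ to $x$ yields $F(x)-F(x_0)\le r(x)$, i.e. $F(x)\le r(x)+|F(x_0)|$.

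For the lower bound, first note that $x_0$ is a critical point of $f$: since $x_0$ is an interior maximum of $R$ we have $\nabla R(x_0)=0$, and substituting into (2.2) gives $R_{ij}(x_0)\nabla_jf(x_0)=0$; because the matrix $R_{ij}(x_0)$ is invertible (as $Ric(x_0)>0$) we conclude $\nabla f(x_0)=0$ (in particular $R(x_0)=1-|\nabla f(x_0)|^2=1$, consistent with $R_{max}=1$). By (1.1), $\nabla_i\nabla_j F=-\nabla_i\nabla_j f=R_{ij}>0$, so $F$ is strictly convex. For a unit vector $v\in T_{x_0}M$, let $\gamma_v$ be the unit-speed geodesic with $\gamma_v(0)=x_0$, $\dot\gamma_v(0)=v$, and put $\varphi_v(s)=F(\gamma_v(s))$. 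Then $\varphi_v'(0)=0$ and $\varphi_v''(s)=R_{ij}\dot\gamma_v^i\dot\gamma_v^j>0$, so $\varphi_v'(s)=\int_0^s R_{ij}\dot\gamma_v^i\dot\gamma_v^j\,dt$ is positive and nondecreasing, and $\varphi_v(s)\ge\varphi_v(0)=F(x_0)$ for all $s\ge 0$.

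The decisive step is to make the slope bound uniform in the direction $v$. Since the geodesic flow and the Ricci tensor are smooth, $v\mapsto\varphi_v'(1)=\int_0^1 R_{ij}\dot\gamma_v^i\dot\gamma_v^j\,dt$ is continuous and strictly positive on the unit sphere of $T_{x_0}M$, which is compact; let $c_1>0$ be its minimum, so $0<c_1\le|\nabla F|\le 1$. Because $\varphi_v'$ is nondecreasing, $\varphi_v'(s)\ge c_1$ for all $s\ge 1$, hence $\varphi_v(s)\ge\varphi_v(1)+c_1(s-1)\ge F(x_0)+c_1(s-1)$ for $s\ge 1$. By Hopf--Rinow every $x\in M$ is joined to $x_0$ by a minimizing geodesic $\gamma_v$ with $x=\gamma_v(r(x))$; evaluating at $s=r(x)$, and using $F\ge F(x_0)$ in the range $r(x)\le 1$, gives $c_1 r(x)-c_2\le F(x)$ with $c_2:=\max\{\,c_1-F(x_0),\,1\,\}>0$. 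The one genuine obstacle is precisely this uniformity-over-directions step; everything else follows routinely from (1.1), (2.2), (2.4). One could alternatively use $\Delta F=R\ge 0$ from (2.5), but the direction-by-direction convexity argument is what yields a \emph{linear} rather than merely sublinear lower bound.
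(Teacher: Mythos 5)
Your proof is correct. The paper itself offers no proof of Proposition~2.2 --- it is stated as a known preliminary fact (the essential ingredient, the gradient lower bound $|\nabla f|^2\ge c_1>0$ outside a compact set, is quoted from Cao--Chen as (3.1)) --- and your argument is precisely the standard one behind it: $\nabla R(x_0)=0$ plus (2.2) and the invertibility of $Ric(x_0)$ give $\nabla f(x_0)=0$; then $\nabla^2 F=Ric>0$ makes $\varphi_v(s)=F(\gamma_v(s))$ convex with $\varphi_v'(0)=0$, and a compactness argument converts the pointwise positivity of $\varphi_v'(1)$ into a uniform constant $c_1$, after which integration along minimizing geodesics (Hopf--Rinow) yields the linear lower bound, while the upper bound follows from $|\nabla F|\le 1$ via (2.4) and $R\ge 0$, exactly as Remark~2.2 indicates. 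Two small comments. First, the uniformity step is more commonly done by noting that any unit-speed geodesic segment of length $1$ from $x_0$ stays in the compact ball $\overline{B}(x_0,1)$, where $Ric\ge \delta g$ for some $\delta>0$, so $\varphi_v'(1)=\int_0^1 Ric(\dot\gamma_v,\dot\gamma_v)\,dt\ge\delta$ for every $v$; this avoids invoking continuity of $v\mapsto\varphi_v'(1)$ (which is also fine, by smooth dependence of geodesics on initial data and completeness). Second, your parenthetical ``$0<c_1\le|\nabla F|\le 1$'' is not literally true on all of $M$ (at $x_0$ one has $\nabla F=0$); it holds only at points $\gamma_v(s)$ with $s\ge 1$, i.e.\ outside $B(x_0,1)$, which is in fact exactly Cao--Chen's estimate (3.1). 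Since your chain of inequalities only uses $\varphi_v'(s)\ge c_1$ for $s\ge1$, this slip does not affect the proof.
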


\begin{remark} In (2.7), only the lower bound on $F$ requires the assumptions on $Ric$ and $R$.
\end{remark}

Note that, under the assumption in Proposition 2.2, $F(x)$ is proportional to the distance function $r(x)=d(x_0, x)$ from above and below. Throughout the paper, we denote
\begin{eqnarray*}
D(t)&=& \{x\in M : F(x) \leq t \},\\ 
B(t)=B(x_0, t)&=&\{x\in M : d(x_0,x) \leq t \}.
\end{eqnarray*}

We also collect several differential equalities on $R, Ric$ and $Rm$ which are special cases of evolution equations of the curvature tensors under the Ricci flow:

\begin{lemma} Let $(M^n, g_{ij}, f)$
be a complete gradient steady soliton satisfying Eq. (1.1). Then, we have
\begin{eqnarray*}
\Delta_{f} R &=&-2|Ric|^2,\\
\Delta_{f} Ric &=& -2R_{ijkl}R_{jl},\\
\Delta_{f} {Rm} &= & Rm\ast Rm,
\end{eqnarray*}
where the RHS of the last equation denotes (a finite number of) terms involving quadratics in $Rm$.
\end{lemma}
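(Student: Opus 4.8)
The plan is to obtain all three identities as the time‑independent (self‑similar) version of the standard evolution equations of curvature under the Ricci flow. Since (2.4) and Lemma 2.2 give $|\nabla f|^2 = 1 - R \le 1$, the vector field $\nabla f$ has bounded length on the complete manifold $M^n$ and is therefore complete; let $\phi_t$ be the one‑parameter group of diffeomorphisms it generates and set $g(t) = \phi_t^*\,g_{ij}$. Using the soliton equation (1.1) in the form $\nabla_i\nabla_j f = -R_{ij}$ one has $\partial_t g(t) = \mathcal{L}_{\nabla f}\,g(t) = 2\nabla^2 f = -2\,Ric(g(t))$, so $g(t)$ is a solution of the Ricci flow. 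Because $R$, $Ric$ and $Rm$ are constructed naturally from the metric, $T(g(t)) = \phi_t^*\,T(g(0))$, and hence $\partial_t T = \mathcal{L}_{\nabla f}\,T$ at every time.

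Next I would invoke the well‑known evolution equations under the Ricci flow (see, e.g., \cite{Chow et al 1}): $\partial_t R = \Delta R + 2|Ric|^2$, $\partial_t R_{ij} = \Delta R_{ij} + 2 R_{ikjl}R_{kl} - 2 R_{ik}R_{kj}$ (the Lichnerowicz form), and $\partial_t R_{ijkl} = \Delta R_{ijkl} + Rm\ast Rm$. Equating $\partial_t$ with $\mathcal{L}_{\nabla f}$ and computing the Lie derivatives with the help of $\nabla^2 f = -Ric$: for the scalar $R$, $\mathcal{L}_{\nabla f} R = \nabla f\cdot\nabla R$, so $\nabla f\cdot\nabla R = \Delta R + 2|Ric|^2$, i.e. $\Delta_f R = -2|Ric|^2$; for $Ric$, $(\mathcal{L}_{\nabla f} Ric)_{ij} = \nabla f\cdot\nabla R_{ij} + R_{kj}\nabla_i\nabla_k f + R_{ik}\nabla_j\nabla_k f = \nabla f\cdot\nabla R_{ij} - 2 R_{ik}R_{kj}$, and comparing with the evolution equation the $-2R_{ik}R_{kj}$ terms cancel, leaving $\Delta_f R_{ij} = -2 R_{ikjl}R_{kl}$; for $Rm$, $(\mathcal{L}_{\nabla f} Rm)_{ijkl}$ equals $\nabla f\cdot\nabla R_{ijkl}$ plus four terms in which $\nabla^2 f = -Ric$ is contracted into a single index of $Rm$, all quadratic in curvature, so $\Delta_f Rm = Rm\ast Rm$.

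As a cross‑check, and to avoid any appeal to existence of the flow, the first two identities also follow by direct computation: taking the divergence of (2.2) and using the contracted second Bianchi identity $\nabla_i R_{ij} = \tfrac12\nabla_j R$ together with $\nabla_i\nabla_j f = -R_{ij}$ gives $\Delta R = \nabla f\cdot\nabla R - 2|Ric|^2$; and starting from the second Bianchi identity, commuting covariant derivatives by the Ricci identity, and inserting the soliton equation yields the $Ric$ equation once the quadratic Ricci terms cancel. The only genuine work is bookkeeping — fixing the curvature sign convention (so that the Lichnerowicz term and the sign in $Rm\ast Rm$ are as above) and verifying those cancellations; there is no analytic obstacle, since everything reduces to algebra once the Ricci‑flow evolution equations and $\nabla^2 f = -Ric$ are in hand.
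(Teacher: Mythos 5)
Your argument is correct and is essentially the paper's own (implicit) justification: the paper states Lemma 2.3 without proof, noting only that these identities are ``special cases of evolution equations of the curvature tensors under the Ricci flow,'' which is precisely your route of pulling back by the flow of $\nabla f$, identifying $\partial_t$ with $\mathcal{L}_{\nabla f}$, and using $\nabla^2 f=-Ric$ to cancel the quadratic Ricci terms. Your added details (completeness of $\nabla f$ from $|\nabla f|\le 1$, the explicit Lie-derivative bookkeeping, and the direct Bianchi-identity cross-check for the first two identities) are all sound and only make the sketch more self-contained.
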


Based on Lemma 2.3,  one can easily derive the following inequalities similar to \cite{MW}:

\begin{lemma} Let $(M^n, g_{ij}, f)$ be a complete gradient steady soliton satisfying Eq. (1.1). Then
\begin{eqnarray*}
\Delta_{f} |Ric|^2 & \ge & 2|\nabla Ric|^2-4|Rm| |Ric|^2, \\
\Delta_{f} |Rm| &\ge & -c |Rm|^2, \\
\Delta_{f}|Rm|^2 & \ge & 2|\nabla Rm|^2 - C|Rm|^3.
\end{eqnarray*}
Here $c>0$ is some universal constant depending only on the dimension $n$.
\end{lemma}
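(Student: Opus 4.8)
The plan is to combine the drift–Laplacian curvature identities of Lemma 2.3 with the elementary Bochner-type identity
\[
\Delta_{f}|T|^2 = 2\langle \Delta_{f} T, T\rangle + 2|\nabla T|^2,
\]
valid for any tensor field $T$. This follows from the usual identity $\Delta|T|^2=2\langle\Delta T,T\rangle+2|\nabla T|^2$ (for the rough Laplacian) together with $\nabla f\cdot\nabla|T|^2=2\langle\nabla_{\nabla f}T,T\rangle$, since by definition $\Delta_{f}=\Delta-\nabla f\cdot\nabla$ and $\Delta_{f}T=\Delta T-\nabla_{\nabla f}T$.

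First I would treat $|Ric|^2$ and $|Rm|^2$, where the identity can be applied directly. Taking $T=Ric$ and inserting $\Delta_{f} Ric=-2R_{ijkl}R_{jl}$ from Lemma 2.3 gives $\Delta_{f}|Ric|^2 = -4R_{ijkl}R_{jl}R_{ik}+2|\nabla Ric|^2$. The cubic term is estimated pointwise by viewing the curvature operator $h_{jl}\mapsto R_{ijkl}h_{jl}$ as a self-adjoint endomorphism of the space of symmetric $2$-tensors; its operator norm is bounded by its Frobenius norm, which is exactly $|Rm|$, so Cauchy–Schwarz yields $|R_{ijkl}R_{jl}R_{ik}|\le |Rm|\,|Ric|^2$, and the first asserted inequality follows. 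Similarly, with $T=Rm$ and $\Delta_{f} Rm=Rm\ast Rm$ we get $\Delta_{f}|Rm|^2=2\langle Rm\ast Rm,Rm\rangle+2|\nabla Rm|^2$; since $Rm\ast Rm$ is a fixed universal quadratic contraction of $Rm\otimes Rm$, one has $|\langle Rm\ast Rm,Rm\rangle|\le \tfrac{C}{2}|Rm|^3$ with $C=C(n)$, which gives the third inequality.

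For the middle inequality I would invoke Kato's inequality $|\nabla|Rm||\le|\nabla Rm|$, valid away from the zero set of $|Rm|$. At a point where $|Rm|>0$, differentiating $|Rm|^2=|Rm|\cdot|Rm|$ gives $\Delta_{f}|Rm|^2 = 2|Rm|\,\Delta_{f}|Rm|+2|\nabla|Rm||^2$, so combining with the third inequality,
\[
2|Rm|\,\Delta_{f}|Rm| = \Delta_{f}|Rm|^2-2|\nabla|Rm||^2 \ \ge\ 2|\nabla Rm|^2-2|\nabla|Rm||^2-C|Rm|^3 \ \ge\ -C|Rm|^3,
\]
hence $\Delta_{f}|Rm|\ge -\tfrac{C}{2}|Rm|^2=:-c|Rm|^2$ with $c=c(n)$. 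Since $|Rm|\ge 0$ is continuous, this inequality persists across the zero set of $|Rm|$ in the barrier (viscosity) sense, which is all that is needed for the maximum-principle arguments in the sequel. The computation is entirely routine; the only points requiring a little care are the pointwise algebraic bounds on the cubic curvature terms $R_{ijkl}R_{jl}R_{ik}$ and $\langle Rm\ast Rm,Rm\rangle$ (together with the bookkeeping of the resulting dimensional constant) and the standard regularization at zeros of $|Rm|$ via Kato's inequality, so I do not expect a genuine obstacle here.
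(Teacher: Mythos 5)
Your argument is correct and follows exactly the route the paper intends: the paper gives no details, simply noting that the inequalities follow from Lemma 2.3 "as in \cite{MW}" with the Kato inequality $|\nabla|Rm||\le|\nabla Rm|$ needed for the middle estimate, which is precisely your derivation via the drift Bochner identity, Cauchy--Schwarz on the cubic curvature contractions, and Kato at points where $|Rm|>0$. The constants and the handling of the zero set of $|Rm|$ (barrier sense) are all fine.
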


\begin{remark} To derive the second differential inequality, one needs to use the Kato  inequality
$|\nabla |Rm||\le |\nabla Rm|$ as shown in \cite{MW}.
\end{remark}

\section{4D gradient steady solitons with positive Ricci curvature}

First of all, we need the following key fact, valid for 4-dimensional gradient steady Ricci solitons in general,  due to Munteanu and Wang \cite{MW}.

\begin{lemma}{\bf (Munteanu-Wang \cite{MW})} Let $(M^4, g_{ij}, f)$ be a complete noncompact gradient steady soliton satisfying (1.1).
Then, there exists some universal constant $c>0$ such that
$$ |Rm|\le c \large( \frac {|\nabla Ric|} {|\nabla f|} + \frac{|Ric|^2}{|\nabla f|^2} +|Ric| \large). $$
\end{lemma}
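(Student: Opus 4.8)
The plan is to exploit the algebraic structure of the curvature tensor in dimension four together with the soliton identities, following the strategy Munteanu--Wang used for shrinkers. The starting point is the second contracted Bianchi-type identity available on a gradient Ricci soliton: from $(1.1)$ one has $\nabla_l R_{ik}-\nabla_k R_{il}=R_{iklj}\nabla_j f$. This expresses the full ``antisymmetrized derivative'' of the Ricci tensor in terms of a single contraction of $Rm$ against $\nabla f$. The idea is that, in dimension four, this one contraction is enough to recover all of $Rm$ up to lower-order (in the sense of derivative count) terms, because of the special representation-theoretic decomposition $\Lambda^2(\RR^4)=\Lambda^2_+\oplus\Lambda^2_-$ and the fact that the Weyl tensor acts as a self-adjoint endomorphism on each three-dimensional summand.

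First I would fix a point $p$ where $|\nabla f|(p)\neq 0$ (at points where $\nabla f=0$ the scalar curvature is critical and one argues by a limiting/continuity argument, or notes that $R(p)=|Ric|(p)\cdot(\text{bounded})$ suffices), normalize $e_1=\nabla f/|\nabla f|$ and complete to an orthonormal frame $\{e_1,e_2,e_3,e_4\}$. The Bianchi identity above then gives every component $R_{i1kl}$ (those with a ``1'' among the last two indices) as $\frac{1}{|\nabla f|}(\nabla_l R_{ik}-\nabla_k R_{il})$ when $l=1$, so $|R_{i1k1}|\le \frac{2|\nabla Ric|}{|\nabla f|}$. The remaining components are $R_{ijkl}$ with all indices in $\{2,3,4\}$, i.e. the ``$3\times3$ block'' of the curvature operator restricted to the two-planes not involving $e_1$. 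Here I would invoke the four-dimensional decomposition $Rm=W+\frac{1}{2}\,Ric\owedge g-\frac{R}{12}g\owedge g$ (Kulkarni--Nomizu products): the Ricci and scalar parts of these components are bounded by $|Ric|$ directly, so it remains to bound the Weyl components $W_{ijkl}$ with indices in $\{2,3,4\}$. The key point is that $W$ is trace-free and, viewed as an operator on $\Lambda^2$, commutes with the Hodge star; writing $W=W^+\oplus W^-$ one checks that knowing the ``mixed'' blocks of $W$ — the pairings of a two-form containing $e_1\wedge e_j$ against one containing $e_k\wedge e_l$ — together with the trace-free and first-Bianchi constraints on each of $W^\pm$ pins down the remaining ``pure'' blocks up to a bounded multiple; those mixed blocks are exactly the $W_{1jkl}$, which we have already controlled by $\frac{|\nabla Ric|}{|\nabla f|}+|Ric|$.

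The main obstacle I expect is precisely this last linear-algebra step: showing that the full symmetric operator $W^\pm$ on a three-dimensional space is controlled by a fixed sub-block of it. This is false for a generic symmetric $3\times3$ matrix, so one must use the extra structure — that $W^+$ and $W^-$ are \emph{trace-free} and that $W$ satisfies the first Bianchi identity, which relates the $+$ and $-$ parts' diagonal entries. The honest route is: with respect to the splitting $\Lambda^2=\langle e_1\wedge e_j\rangle_{j=2,3,4}\oplus\langle e_k\wedge e_l\rangle_{\{k,l\}\subset\{2,3,4\}}$, the Hodge star swaps the two $3$-dimensional pieces, so each $W^{\pm}$-eigenform is $\alpha\pm*\alpha$; pairing $W(\alpha\pm*\alpha)$ against $\beta\mp*\beta$ shows that the ``pure block'' $\langle W(e_k\wedge e_l),e_{k'}\wedge e_{l'}\rangle$ equals $\pm\langle W(e_1\wedge e_j),e_{1}\wedge e_{j'}\rangle$ plus mixed terms, up to reindexing via the star. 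Hence every component of $W$ — in particular the dangerous ones — is a bounded combination of the components $W_{1jkl}$ and $W_{1j1k}$, which are bounded by $\frac{|\nabla Ric|}{|\nabla f|}+|Ric|$. Combining: $|Rm|\le |W|+c|Ric|\le c\big(\frac{|\nabla Ric|}{|\nabla f|}+|Ric|\big)$, and since along the way one also uses $R_{i1j1}$ twice one could pick up a $\frac{|Ric|^2}{|\nabla f|^2}$ contribution if one instead estimates via $\nabla_i R = 2R_{ij}\nabla_j f$ (giving $|\nabla R|\le 2|Ric||\nabla f|$ and controlling $|\nabla f|$-weighted terms), which accounts for the middle term in the stated inequality. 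Finally, the set where $\nabla f\ne 0$ is open and dense (or one handles $\nabla f=0$ separately, where $Ric>0$ is not assumed but the estimate degenerates gracefully since both sides can be compared directly), so the pointwise bound extends to all of $M^4$ by continuity.
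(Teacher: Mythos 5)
Your proof is essentially correct, but it is not the route the paper takes. The paper gives no self-contained argument: it defers to Proposition 1.1 of Munteanu--Wang, noting only that their proof goes through with the steady identities and without replacing $|\nabla f|^2$ by $f$. That proof shares your first step --- the soliton identity $R_{ijkl}\nabla_l f=\nabla_j R_{ik}-\nabla_i R_{jk}$ (signs aside) bounds every component of $Rm$ with an index along $e_1=\nabla f/|\nabla f|$ by $|\nabla Ric|/|\nabla f|$ --- but recovers the remaining components \emph{extrinsically}: at a point with $\nabla f\neq 0$ one works on the level hypersurface $\Sigma=\{f=\mathrm{const}\}$, whose second fundamental form is $\nabla^2 f/|\nabla f|=-Ric/|\nabla f|$; since $\Sigma$ is $3$-dimensional its curvature is determined by its Ricci curvature (Weyl vanishes in dimension three), and the Gauss equation converts this into a bound on the tangential components of $Rm$, the quadratic second-fundamental-form terms being exactly the source of the middle term $|Ric|^2/|\nabla f|^2$. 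You instead stay pointwise and use the four-dimensional algebra: write $Rm=W+{}$(Ricci/scalar terms bounded by $c|Ric|$) and use that the Weyl operator commutes with the Hodge star, so that, with the bases of $\mathrm{span}\{e_1\wedge e_j\}$ and $\mathrm{span}\{e_k\wedge e_l\}_{k,l\ge 2}$ matched by $*$, the ``pure'' block of $W$ is expressed through its blocks carrying an $e_1$ index; your pairing of $W(\alpha\pm *\alpha)$ against $\beta\mp *\beta$ does establish this, and the trace-free and first-Bianchi constraints you were worried about are not actually needed. This is a genuinely different argument, and it in fact yields the sharper bound $|Rm|\le c\bigl(|\nabla Ric|/|\nabla f|+|Ric|\bigr)$, so the term $|Ric|^2/|\nabla f|^2$ is simply redundant in your approach --- your closing attempt to ``account for'' it via $\nabla R=2Ric(\nabla f)$ is unnecessary (and not a real derivation), whereas in the paper's route that term arises unavoidably from the Gauss equation. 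One caveat applying to both proofs: the inequality is meaningful only where $\nabla f\neq 0$ (for a Ricci-flat soliton with constant $f$ the right-hand side is $0/0$ while $|Rm|$ need not vanish), so your continuity/density remark cannot extend it to critical points and should be dropped; this is harmless here since the paper applies the lemma only where $|\nabla f|\ge c_1>0$.
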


\begin{proof} This follows from the same arguments as in the proof of Proposition 1.1 of \cite{MW}, but using Lemma 2.4 instead and without replacing $|\nabla f|^2$ by $f$ in their argument.

\end{proof}

\begin{proposition} Let $(M^4, g_{ij}, f)$ be a complete noncompact gradient steady soliton with positive Ricci curvature and $R$ attains maximum.
Then, there exists some constant $C>0$, depending on the constant $c_1$ in (2.7), such that outside a compact set,
$$ |Rm|\le C \large( |\nabla Ric| + |Ric|^2 +|Ric| \large). $$
\end{proposition}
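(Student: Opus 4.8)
The plan is to deduce Proposition 3.1 from Lemma 3.1 by showing that, outside a compact set, the quantity $|\nabla f|$ appearing in the denominators is bounded below by a positive constant. Recall from the normalization (2.4) that $R + |\nabla f|^2 = 1$, so $|\nabla f|^2 = 1 - R$. Since the scalar curvature attains its maximum $R_{\max} = 1$ at $x_0$ and (by Lemma 2.1, i.e. $\Delta_f R = -2|Ric|^2 < 0$ because $Ric > 0$) $R$ is a nonconstant $\Delta_f$-subharmonic... wait, superharmonic... function, one expects $R$ to decay away from $x_0$; the real input is Proposition 2.2, which gives $F(x) = -f(x) \ge c_1 r(x) - c_2$, hence $F(x) \to \infty$ as $x \to \infty$.

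First I would translate the growth of $F$ into a lower bound on $|\nabla f|$. From (2.3)--(2.4) we have $|\nabla F|^2 = |\nabla f|^2 = 1 - R \le 1$, so $F$ is Lipschitz with constant $1$; combined with the lower bound $F(x) \ge c_1 r(x) - c_2$ this is consistent but does not by itself force $|\nabla f|$ away from zero. The cleaner route: integrate $\Delta_f F = 1$ (equation (2.5)) over the sublevel set $D(t)$ and use the weighted divergence theorem. Since $Ric > 0$ gives $\Delta_f R = -2|Ric|^2 \le 0$, the function $R$ is $\Delta_f$-superharmonic, so $1 - R = |\nabla f|^2$ is $\Delta_f$-subharmonic; by the maximum principle $|\nabla f|^2$ cannot attain an interior minimum unless constant, and since $|\nabla f|^2 = 1 - R_{\max} = 0$ exactly at $x_0$ (and possibly its minimum set), away from a neighborhood of $x_0$ we expect $R$ to be bounded away from $1$. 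To make this rigorous I would argue: the set where $R = 1$ is where $|\nabla f| = 0$, i.e. the critical set of $f$; by Proposition 2.2 (since $F \to \infty$), this critical set is compact; and on the complement of a compact neighborhood of it, $R \le 1 - \delta$ for some $\delta > 0$, giving $|\nabla f|^2 \ge \delta$, hence $|\nabla f| \ge \sqrt{\delta}$.

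Granting $|\nabla f| \ge \sqrt{\delta} > 0$ outside a compact set, the conclusion is immediate from Lemma 3.1: there
$$ |Rm| \le c\Bigl( \frac{|\nabla Ric|}{|\nabla f|} + \frac{|Ric|^2}{|\nabla f|^2} + |Ric| \Bigr) \le c\Bigl( \frac{1}{\sqrt{\delta}} |\nabla Ric| + \frac{1}{\delta} |Ric|^2 + |Ric| \Bigr) \le C\bigl( |\nabla Ric| + |Ric|^2 + |Ric| \bigr), $$
with $C$ depending only on $\delta$, and $\delta$ in turn controlled by the constant $c_1$ of (2.7), as claimed.

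The main obstacle is the step asserting $R \le 1 - \delta$ outside a compact set, i.e. that $|\nabla f|$ does not come back near zero at infinity. One must rule out the possibility that $R$ approaches its maximum value $1$ again along a sequence going to infinity. Here is where the hypotheses $Ric > 0$ and "R attains its maximum" must be used together: I would invoke the strong maximum principle for $\Delta_f$ applied to $R$ (which gives that $R < 1$ everywhere away from $x_0$, since $R \equiv 1$ would force $|Ric|^2 \equiv 0$, contradicting $Ric > 0$) and then upgrade this to a quantitative bound by a compactness/contradiction argument combined with the gradient estimate $|\nabla R| = 2|Ric \cdot \nabla f| \le 2|Ric|$; alternatively, one can use that along any minimizing geodesic from $x_0$, the identity $\nabla_i R = 2R_{ij}\nabla_j f$ together with $F$ being proper forces $R$ to decay, since $\partial_r F \ge c_1 > 0$ asymptotically pairs with $Ric > 0$ to make $R$ strictly decreasing in $r$ along such rays outside a compact set. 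Making this last assertion precise — controlling $R$ uniformly in all directions, not just along one geodesic — is the technical heart of the argument.
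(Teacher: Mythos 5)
Your overall strategy is the right one and matches the paper's: reduce Proposition 3.1 to Lemma 3.1 by establishing a uniform positive lower bound on $|\nabla f|$ outside a compact set, then absorb the denominators into the constant. The final step of your argument (from $|\nabla f|\ge \sqrt{\delta}$ to the stated inequality) is exactly the paper's deduction. However, there is a genuine gap: the lower bound $|\nabla f|^2\ge c_1>0$ outside a compact set is precisely the nontrivial content here, and your proposal does not prove it. The paper does not prove it either from scratch --- it quotes it as a known fact (3.1) due to Cao--Chen \cite{CaoChen}, established there for steady solitons with $Ric>0$ and $R$ attaining its maximum (and closely tied to the lower bound in (2.7)). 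You, by contrast, only sketch candidate arguments and explicitly concede that ``controlling $R$ uniformly in all directions'' is left open, so as written the proposal is incomplete at its central point.

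Two specific weaknesses in the sketch: first, the claim that the critical set of $f$ is compact ``by Proposition 2.2, since $F\to\infty$'' is a non sequitur --- properness of $F$ says nothing about where $\nabla F$ vanishes; the correct (and easy) reason is that $\nabla\nabla f=-Ric<0$ makes $f$ strictly concave, so it has at most one critical point. Second, and more importantly, even granting a compact (indeed, one-point) critical set, this does not yield $R\le 1-\delta$ at infinity: $|\nabla f|$ could a priori tend to $0$ along a sequence going to infinity without ever vanishing, and ruling this out is exactly what requires the Cao--Chen analysis (e.g., monotonicity of $|\nabla f|^2$ along the flow of $-\nabla f$ coming from $\langle\nabla|\nabla f|^2,\nabla f\rangle=-2\,Ric(\nabla f,\nabla f)<0$, together with the properness estimate (2.7)). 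To complete your proof you should either carry out that argument or simply cite \cite{CaoChen} for (3.1), as the paper does.
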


\begin{proof} This easily follows from Lemma 3.1 and the following fact shown by Cao-Chen \cite {CaoChen},
$$|\nabla f|^2\ge c_1>0. \eqno(3.1)$$
\end{proof}

\begin{remark}
Note that, combining (3.1) with (2.4) and Lemma 2.2, we have
$$0< c_1 \le |\nabla F|^2=|\nabla f|^2\le 1.\eqno(3.2)$$
\end{remark}

\medskip
Now we are ready to prove the first main result of our paper.

\begin{theorem}  Let $(M^4, g_{ij}, f)$ be a complete noncompact gradient steady soliton with positive Ricci curvature $Ric>0$ such that $R$ attains its maximum
at some point $x_0\in M^4$.
Then, there exists some constant $C>0$, depending on $c_1$ in (2.7), such that
$$ \sup_{x\in M} |Rm|  \le C. $$

\end{theorem}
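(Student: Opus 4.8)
The plan is to prove the bound $\sup_M |Rm| \le C$ by first establishing a quantitative decay of $|Ric|$ along the soliton and then feeding it into the pointwise estimate from Proposition 3.1. The starting point is the identity $\Delta_f R = -2|Ric|^2$ from Lemma 2.3, together with $\Delta_f F = 1$ from (2.5), where $F = -f$ is comparable to the distance $r(x)$ by Proposition 2.2 (this is where $Ric>0$ and $R$ attaining its maximum are used). Since $R \le 1$ is bounded and $\Delta_f R = -2|Ric|^2 \le 0$, the function $R$ is $f$-superharmonic; combined with the quadratic-type information coming from integrating $|Ric|^2$ against the weighted measure $e^{-f}\,dV$, one should get that $|Ric|^2$ is integrable (with the weight) over $M$ and in particular decays in an averaged sense along the level sets of $F$.

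Next I would turn the averaged decay into a pointwise decay. The natural device is to apply the weighted maximum principle / De Giorgi–Nash–Moser iteration to the differential inequalities in Lemma 2.4, namely $\Delta_f |Ric|^2 \ge -4|Rm||Ric|^2$ and $\Delta_f |Rm| \ge -c|Rm|^2$. Because $\Delta_f w = e^{f}\,\mathrm{div}(e^{-f}\nabla w)$ is the Laplace–Beltrami operator of the metric with respect to the measure $e^{-f}\,dV$, local Moser iteration applies on geodesic balls provided one controls the relevant Sobolev/volume constants; here the key input is the linear comparison $c_1 r - c_2 \le F \le r + |F(x_0)|$ from (2.7), which lets one run the iteration on the regions $D(t)$ and shows that a smallness of the $L^2$-norm of $|Ric|$ on an annulus forces a smallness of $\sup |Ric|$ there. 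Iterating outward, one concludes $|Ric|(x) \to 0$ as $x \to \infty$, hence $|Ric| \le \eps$ outside a compact set for any prescribed $\eps$.

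The final step is a bootstrap using Proposition 3.1: outside a compact set,
$$ |Rm| \le C\bigl( |\nabla Ric| + |Ric|^2 + |Ric| \bigr). $$
The terms $|Ric|^2$ and $|Ric|$ are already controlled by the previous step, so everything reduces to bounding $|\nabla Ric|$. For this I would use the inequality $\Delta_f |Ric|^2 \ge 2|\nabla Ric|^2 - 4|Rm||Ric|^2$ from Lemma 2.4: integrating this against a cutoff supported on an annulus, the left side contributes a boundary-type term controlled by $\sup|Ric|^2$, while $|Rm|$ on the right can itself be re-expressed through Proposition 3.1. Since $|Ric|$ is small at infinity, the $|Rm||Ric|^2$ term is absorbed, yielding an $L^2$ bound on $\nabla Ric$ on annuli, and then a second Moser iteration (or a standard interpolation/Shi-type local gradient estimate for $Ric$, which holds once $|Ric|$ is locally bounded) upgrades this to a pointwise bound $|\nabla Ric| \le C$. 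Plugging back into Proposition 3.1 gives $|Rm| \le C$ outside a compact set, and on the compact set $|Rm|$ is trivially bounded by continuity, completing the proof.

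I expect the main obstacle to be the passage from integral (weighted-$L^2$) control of $|Ric|$ and $\nabla Ric$ to pointwise bounds: this requires the Moser iteration to be uniform in the annulus, which in turn needs a uniform local Sobolev inequality (equivalently a lower volume bound / non-collapsing on unit balls) for the weighted operator $\Delta_f$. Establishing that uniformity — presumably from $Ric>0$, the comparison (2.7), and a volume comparison argument — is the delicate technical heart; once it is in place, the curvature-algebra identities and the Munteanu–Wang estimate of Proposition 3.1 do the rest essentially mechanically.
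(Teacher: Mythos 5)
Your proposal has a genuine gap, in fact two. First, the claimed integral-to-pointwise decay of $|Ric|$ does not work: for a steady soliton normalized by (2.4) with $F=-f$ comparable to $r(x)$, the weighted measure $e^{-f}\,dV=e^{F}\,dV$ grows \emph{exponentially} at infinity (you are implicitly importing intuition from the shrinking case, where $e^{-f}$ is Gaussian). Integrating $\Delta_f R=-2|Ric|^2$ over $D(t)$ only gives $2\int_{D(t)}|Ric|^2e^{F}\le \int_{\partial D(t)}|\nabla R|\,e^{F}$, whose right-hand side grows like $e^{t}\cdot\mathrm{Area}(\partial D(t))$, so no weighted integrability and no averaged decay of $|Ric|$ follows. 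Indeed, the conclusion you want as an intermediate step --- $|Ric|\le\eps$ outside a compact set for every $\eps$, i.e.\ $Ric\to 0$ at infinity --- is strictly stronger than anything available here: the paper itself treats curvature decay ($R\to 0$, or $R\ge c/r$) as an \emph{additional hypothesis} in Theorems 1.2 and 3.2, precisely because it is not known to follow from $Ric>0$ and $R$ attaining its maximum. Second, your final bootstrap is circular: a "Shi-type local gradient estimate for $Ric$" requires a local bound on the full curvature $|Rm|$, not just on $|Ric|$, and a uniform bound on $|Rm|$ is exactly what the theorem is trying to prove; likewise the uniform Sobolev/non-collapsing input needed for your Moser iterations is nowhere established (nor needed in the paper).

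The paper's proof avoids all of this and never establishes any decay of $Ric$. It works pointwise with the maximum principle: set $u=|Rm|+\lambda|Ric|^2$; since $|Ric|\le R\le 1$, Proposition 3.1 gives $|\nabla Ric|^2\ge \frac{1}{2C^2}|Rm|^2-4$, and combining the first two inequalities of Lemma 2.4 with $\lambda$ large yields $\Delta_f u\ge u^2-C$. Then one cuts off with $\varphi=\varphi(F(x))$ supported in $D(2R_0)$: because $\Delta_f F=1$ and $c_1\le|\nabla F|^2\le 1$ (this is where $Ric>0$, $R_{\max}$ attained, and (2.7)/(3.1) enter), one gets $|\nabla\varphi|,|\Delta_f\varphi|\le c/R_0$, and the maximum principle applied to $G=\varphi^2u$ gives $G\le C$ independently of $R_0$, hence $\sup_M|Rm|\le C$. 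So the Munteanu--Wang estimate is used not to control $|Rm|$ by a separately obtained bound on $|\nabla Ric|$, but to convert the good term $2\lambda|\nabla Ric|^2$ in $\Delta_f|Ric|^2$ into a term quadratic in $|Rm|$ that dominates the bad term $-C|Rm|^2$; this is the key algebraic idea your plan is missing.
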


\begin{proof} First of all, from (2.4), we have $R\le 1$. Hence, since $Ric>0$, it follows that
$$ 0<|Ric|\le R\le 1. \eqno(3.3)$$
Thus, by Proposition 3.1 and (3.3), we see that
$$|\nabla Ric|^2\geq \frac{1}{2C^2}|Rm|^2 -(|Ric|^2+|Ric|)^2\ge  \frac{1}{2C^2}|Rm|^2-4. \eqno(3.4)$$
Using the first two inequalities in Lemma 2.4, we obtain
$$\Delta_f(|Rm|+\lambda |Ric|^2) \geq -C |Rm|^2 + 2\lambda (|\nabla Ric|^2-2|Rm| |Ric|^2). \eqno(3.5)$$
By (3.3), (3.4), and picking constant $\lambda>0$ sufficiently large
(depending on the constant $C$ in Proposition 3.1, hence on $c_1$), it follows that
$$
\Delta_f(|Rm|+\lambda |Ric|^2) \ge 2|Rm|^2-4\lambda |Rm|-C'
\ge (|Rm|+\lambda |Ric|^2)^2 -C. \eqno(3.6)
$$

Next, let $\varphi (t)$ be a smooth function on $\mathbb R^{+}$ so that $0\le \varphi (t)\le 1$, $\varphi (t)=1$ for
$0\le t\le R_0$, $\varphi (t)=0$ for
$t\ge 2R_0$, and
$$ t^2 \left(|\varphi'(t)|^2+|\varphi''(t)|\right)\le c \eqno(3.7)$$
for some universal constant $c$ and $R_0>0$ arbitrary large. We now take $\varphi=\varphi (F(x))$ as a cut-off function with support in $D(2R_0)$. Note that 
$$ |\nabla \varphi|=|\varphi'||\nabla F|\le  \frac {c} {R_0} \quad  \mbox{and} \quad |\Delta_f \varphi| \le |\varphi' \Delta_f F| +|\varphi''||\nabla F|^2\le \frac c R_0\eqno(3.8)$$
on $D(2R_0)\setminus D(R_0)$ for some universal constant $c$.

Setting $u = |Rm|+\lambda |Ric|^2$ and $G=\varphi^2 u$, then direct computations, (3.6) and (3.8)
yield
\begin{eqnarray*} \varphi^2 \Delta_f G &=&\varphi^4 \Delta_f u +\varphi^2u \Delta_f (\varphi^2) +2\varphi^2\nabla u\cdot\nabla \varphi^2 \\
&\ge &\varphi^4 \left( u^2 -C\right) + \varphi^2 u \left(2\varphi\Delta_f \varphi +2|\nabla\varphi|^2\right) +2\nabla G\cdot\nabla \varphi^2 -8|\nabla \varphi|^2 G \\
&\ge & G^2+2\nabla G\cdot\nabla \varphi^2  -C G -C.
\end{eqnarray*}
Now it follows from the maximum principle that  $G\le C$ on $D(2R_0)$ by some constant $C>0$  depending on $c_1$ but independent of $R_0$. Hence $u=|Rm|+\lambda |Ric|^2\le C$ on $D(R_0)$. Since $R_0>0$ is arbitrary large, we see that
$$ \sup_{x\in M}|Rm|\le \sup_{x\in M} \left(|Rm|+\lambda |Ric|^2\right)\le C.$$
This completes the proof of Theorem 3.1.
\end{proof}

\begin{proposition} Let $(M^4, g_{ij}, f)$ be a complete noncompact gradient steady soliton with positive Ricci curvature $Ric>0$ and $R$ attains its maximum at $x_0\in M^4$.
Then, function $u=\frac{|Rm| +\lambda |Ric|^2}{R}$, with $\lambda>0$ sufficiently large, satisfies the differential inequality
$$\Delta_f u \ge u^2R -CR -2\nabla u\cdot\nabla (\log R)$$
for some constant  $C>0$.

\end{proposition}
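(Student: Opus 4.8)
The plan is to regard $u = v/R$ with $v := |Rm| + \lambda|Ric|^2$, and to feed into the quotient an \emph{improved} form of the differential inequality $(3.6)$: namely $\Delta_f v \ge v^2 - CR^2$, rather than the weaker $\Delta_f v \ge v^2 - C$. The whole point is that dividing a bound of the shape $\Delta_f v \ge v^2 - C$ by $R$ would produce the useless term $-C/R$ (recall $R\le 1$), whereas dividing $\Delta_f v \ge v^2 - CR^2$ by $R$ produces the harmless $-CR$. So first I would re-examine the chain $(3.5)\to(3.6)$, this time \emph{not} using the crude bound $|Ric|\le 1$. Starting from Lemma 2.4, inserting Proposition 3.1 (valid outside a fixed compact set, with constant depending on $c_1$), and absorbing the cross term $4\lambda|Rm|\,|Ric|^2$ by Young's inequality, one gets, for $\lambda$ large enough depending on the constant in Proposition 3.1, an estimate of the form $\Delta_f v \ge 2|Rm|^2 - C'\big(|Ric|^4 + |Ric|^2\big)$ outside that compact set. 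Since $v^2 = (|Rm| + \lambda|Ric|^2)^2 \le 2|Rm|^2 + 2\lambda^2|Ric|^4$, this gives $\Delta_f v \ge v^2 - C''\big(|Ric|^4 + |Ric|^2\big)$. Now use $Ric>0$, which forces $|Ric|\le R$ because the eigenvalues of $Ric$ are positive, together with $R\le 1$ from $(2.4)$: hence $|Ric|^4 + |Ric|^2 \le 2R^2$, and therefore $\Delta_f v \ge v^2 - CR^2$ outside the compact set. On the compact set, $R$ has a positive lower bound (note $R>0$ everywhere since the soliton is not Ricci-flat, by the Remark following Lemma 2.2) while $v$ and $\Delta_f v$ are bounded, so after enlarging $C$ the inequality $\Delta_f v \ge v^2 - CR^2$ holds on all of $M$.

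With that in hand I would apply the product rule for the drift Laplacian, $\Delta_f(ab) = a\Delta_f b + b\Delta_f a + 2\nabla a\cdot\nabla b$, to $v = uR$, which gives
$$\Delta_f u = \frac{\Delta_f v}{R} - \frac{u\,\Delta_f R}{R} - 2\nabla u\cdot\nabla(\log R).$$
By Lemma 2.3, $\Delta_f R = -2|Ric|^2 \le 0$, so since $u>0$ and $R>0$ the middle term equals $2u|Ric|^2/R \ge 0$ and may simply be discarded. Combining with $\Delta_f v \ge v^2 - CR^2$ and the identity $v^2/R = u^2R$ then yields
$$\Delta_f u \ge \frac{v^2 - CR^2}{R} - 2\nabla u\cdot\nabla(\log R) = u^2 R - CR - 2\nabla u\cdot\nabla(\log R),$$
which is the assertion.

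The only genuinely non-mechanical point is the first step: one must notice that the "constant" error $-C$ in $(3.6)$ is in fact an error of order $R^2$, because every lower-order term there is controlled by $|Ric|^4 + |Ric|^2$ and, under $Ric>0$, $|Ric|\le R\le 1$. Everything else — the choice of $\lambda$ via Young's inequality, the compactness argument on the exceptional set, and the quotient computation — is routine.
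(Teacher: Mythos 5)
Your proposal is correct and follows essentially the same route as the paper: the paper likewise sharpens (3.6) by keeping the lower-order error as $C|Ric|^2$ (rather than a constant), then computes $\Delta_f\bigl(vR^{-1}\bigr)$ by the product rule, discards the favorable term coming from $\Delta_f R=-2|Ric|^2$, and uses $Ric>0\Rightarrow|Ric|\le R\le 1$ to turn the error into $-CR$. Your handling of the compact set where Proposition 3.1 fails, and your direct quotient identity in place of the paper's expansion of $\Delta_f(R^{-1})$ with the cancelling $|\nabla R|^2$ terms, are only cosmetic differences.
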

\begin{proof}
First of all, similar to deriving (3.4)-(3.6) in the proof of Theorem 3.1, by choosing $\lambda$ sufficiently large we have
\begin{eqnarray*}
\Delta_f(|Rm|+\lambda |Ric|^2) &\ge& (|Rm|+\lambda |Ric|^2)^2 - 4\lambda^2 |Ric|^4 -\lambda (|Ric|^4 + |Ric|^2)\\
&\ge& (|Rm|+\lambda |Ric|^2)^2 - C|Ric|^2
\end{eqnarray*}
for some constant $C>0$. Here we have also used the fact (3.3).

Thus, by a direct computation,
\begin{eqnarray*}
\Delta_f u &= &  R^{-1} \Delta_f (|Rm|+\lambda |Ric|^2) + (uR) \Delta_f (R^{-1})
+2\nabla (u R)\cdot \nabla (R^{-1})\\
& \ge & \frac {(|Rm|+\lambda |Ric|^2)^2 - C|Ric|^2} {R} + (uR) \left[2\frac{|Ric|^2}{R^2} +2\frac {|\nabla R|^2}{R^3}\right] \\
& & -\frac {2}{R^2} \left(u |\nabla R|^2+R\nabla u\cdot \nabla R\right)\\
& \ge & Ru^2 -CR -2\nabla u \cdot \nabla \log R.
\end{eqnarray*}

\end{proof}

\begin{theorem}
Let $(M^4,g_{ij},f)$ be a complete noncompact gradient steady Ricci soliton with $Ric>0$ such that  $R$ attains its maximum. Suppose $R$ has at most linear decay, $R(x)\geq c/{r(x)}$, outside a compact set. Then $$\sup_{x\in M}\frac{|Rm|}{R}  \le C.$$
\end{theorem}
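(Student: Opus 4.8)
The plan is to combine the differential inequality of Proposition 3.3 for $u=(|Rm|+\lambda|Ric|^2)/R$ with a localized maximum principle, carried out exactly as in the proof of Theorem 3.1. The two features that make this work under the linear decay hypothesis are: (i) the ``drift'' term $-2\nabla u\cdot\nabla(\log R)$ appearing in Proposition 3.3 is tame, since $|\nabla\log R|\le 2$ on all of $M$; and (ii) linear decay forces $R$ to be bounded below by a fixed multiple of $1/R_0$ on the sublevel set $D(2R_0)$, which exactly compensates the $1/R_0$ smallness of the cut-off derivatives.

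For (i): by (2.2), $\nabla_iR=2R_{ij}\nabla_jf$, so $|\nabla R|\le 2|Ric|\,|\nabla f|$; since $Ric>0$ one has $|Ric|\le R$, and by (2.4) $|\nabla f|\le 1$, whence $|\nabla R|\le 2R$ and $|\nabla\log R|\le 2$ everywhere. For (ii): by Proposition 2.2, $D(2R_0)\subseteq B\big(x_0,(2R_0+c_2)/c_1\big)$, a compact set, so $r\le C_0R_0$ on $D(2R_0)$ for $R_0$ large, with $C_0=C_0(c_1,c_2)$; letting $K$ be the fixed compact set outside which $R\ge c/r$, and using $R=\tr Ric>0$ so that $\min_K R>0$, we get $R\ge c_4/R_0$ on $D(2R_0)$ for all large $R_0$, with $c_4>0$ independent of $R_0$.

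Now fix the cut-off $\varphi=\varphi(F(x))$ of (3.7)--(3.8) ($\varphi\equiv 1$ on $D(R_0)$, $\varphi\equiv 0$ off $D(2R_0)$, $|\nabla\varphi|+|\Delta_f\varphi|\le C/R_0$) and set $G=\varphi^2u$. As $D(2R_0)$ is compact and $u$ is continuous ($R>0$), $G$ attains its maximum at some $x_1\in D(2R_0)$; assume $G(x_1)>0$, so $\varphi(x_1)>0$ and $x_1$ is an interior maximum. As in the proof of Theorem 3.1 one computes
$$\varphi^2\Delta_f G=\varphi^4\Delta_f u+\varphi^2u\,\Delta_f(\varphi^2)+2\varphi^2\nabla u\cdot\nabla(\varphi^2);$$
plugging in $\Delta_f u\ge u^2R-CR-2\nabla u\cdot\nabla(\log R)$ from Proposition 3.3, using $\nabla G(x_1)=0$ to write $\varphi^2\nabla u=-u\nabla(\varphi^2)$ at $x_1$ and thereby eliminate all terms containing $\nabla G$, bounding the surviving drift contribution by $\tfrac{C}{R_0}G$ via $|\nabla\log R|\le 2$ and $|\nabla\varphi|\le C/R_0$, and estimating the remaining cut-off terms by $\tfrac{C}{R_0}G$, the inequality $\varphi^2\Delta_f G(x_1)\le 0$ becomes
$$G(x_1)^2R(x_1)\ \le\ C\,R(x_1)+\frac{C_1}{R_0}\,G(x_1),$$
with $C,C_1$ independent of $R_0$. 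Dividing by $R(x_1)\ge c_4/R_0$ gives $G(x_1)^2\le C+\tfrac{C_1}{c_4}G(x_1)$, hence $G(x_1)\le C_3$ with $C_3$ independent of $R_0$; since $\varphi\equiv 1$ on $D(R_0)$ this yields $\sup_{D(R_0)}u\le C_3$, and letting $R_0\to\infty$ we get $\sup_M u\le C_3$, i.e. $|Rm|\le\sup_M(|Rm|+\lambda|Ric|^2)\le C_3R$, as claimed.

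The delicate point is the bookkeeping in the previous paragraph: one must check that every error term genuinely carries a factor $1/R_0$ --- in particular the drift term $-2\nabla u\cdot\nabla\log R$, which only survives after the maximum principle forces $\nabla G(x_1)=0$ and so enters through $\nabla\varphi$ --- and that this is exactly absorbed upon dividing by $R(x_1)\gtrsim 1/R_0$. This balance is precisely where ``at most linear decay'' is used: under faster decay $R(x_1)$ would be too small to dominate $\tfrac{C_1}{R_0}G(x_1)$ and the estimate for $G(x_1)$ would blow up with $R_0$. A routine technical issue, handled as in the proof of Theorem 3.1, is that $|Rm|$ (hence $u$) is only Lipschitz, so the maximum-principle computation is to be understood in the barrier sense.
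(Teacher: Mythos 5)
Your proposal is correct and takes essentially the same route as the paper: the differential inequality of Proposition 3.2 for $u=(|Rm|+\lambda|Ric|^2)/R$, the gradient bound $|\nabla\log R|\le 2$ from (2.2) and (3.2), the linear-decay lower bound for $R$ on sublevel sets of $F$, and a localized maximum principle with a cutoff in $F$. The only (harmless) difference is technical: you reuse the plateau cutoff of Theorem 3.1 and divide by $R(x_1)\gtrsim 1/R_0$ at the maximum point, whereas the paper uses the linear cutoff $\varphi=(d-F)/d$ and absorbs the $O(1/d)\,G$ error into $\tfrac12 RG^2$ via $R\ge a/d$ before invoking the maximum principle.
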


\begin{proof}
Fix $\lambda$ sufficient large so that Proposition 3.2 holds and set $u=\frac{|Rm|+\lambda|Ric|^2}{R}$.
Next, let $\varphi (t)$ be a smooth function on $\mathbb R^{+}$ so that $\varphi (t)=\frac{d-t}{d}$ for
$0\le t\le d$, $\varphi (t)=0$ for
$t\ge d$. Let $\varphi=\varphi(F)$ and $G=\varphi^2 u$. Then,
\begin{align*}
|\nabla \varphi| &=|\varphi' \nabla F|\le \frac{1}{d},\\
\Delta_f \varphi&=\varphi' \Delta_f F=-\frac{1}{d}.
\end{align*}

Then, outside $D(1)$,
we have
\begin{eqnarray*}
\varphi^2\triangle_f (G) &=&
\varphi^4 (\triangle_f u)+\varphi^2 u (\triangle_f\varphi^2) +2\varphi^2 \nabla\varphi^2\cdot\nabla u\\
&\geq & \varphi^4 \left( R u^2-cR-2 \nabla u\cdot\nabla\log R\right)\\
& &+2 \left(\varphi\triangle_f \varphi+ |\nabla\varphi|^2 \right)G+2\varphi^2 \nabla \varphi^2\cdot\nabla u\\
&\geq & R G^2-cR+ 4\varphi\left( \nabla\varphi\cdot\nabla\log R\right) G\\
& &-\frac{8}{d} G+\left(2\nabla\varphi^2-2\varphi^2\nabla\log R\right)\cdot \nabla G.\\
\end{eqnarray*}

Now by (2.2), (3.2) and $Ric>0$, we have $|\nabla \log R|=2|\frac{Ric(\nabla f)}{R}|\leq 2$. Also, when $R$ has at most linear decay outside some $D(t_0)$ and for $d>t_0$, we have $R\geq \frac{a}{d}$ in $D(d)\backslash D(t_0)$ for some constant $a>0$ independent $d$. Therefore there exists $c$ independent of $d$ so that,
\begin{eqnarray*}
\varphi^2\triangle_f (G) &\geq &
RG^2-cR -\frac c d G +\left(2\nabla\varphi^2-2\varphi^2\nabla\log R\right)\cdot \nabla G\\
&\ge & \frac 1 2 RG^2-cR+\left(2\nabla\varphi^2-2\varphi^2\nabla\log R\right)\cdot \nabla G.
\end{eqnarray*}
 Therefore, it follows from stardard maximum principle argument that  $u\le C$ on $M^4$, hence
$|Rm|\le CR$ on $M^4$.

\end{proof}

\section{The proof of Theorem 1.2}

In this section, we prove our second main result, Theorem 1.2 in the introduction.
Throughout the section we assume $(M^4, g_{ij}, f)$ is a complete noncompact, non Ricci-flat,
$4$-dimensional gradient steady Ricci soliton such that
$$\lim_{x\to \infty} R(x)=0. \eqno (4.1)$$
Note that, by Lemma 2.2 and Remark 2.1, $(M^4, g_{ij}, f)$ necessary has strictly positive scalar curvature $R>0$.

First of all, we need the following useful Laplacian comparison type result for gradient
Ricci solitons.

\begin{lemma} Let $(M^n, g_{ij}, f)$ be any gradient steady Ricci soliton satisfying (1.1)
and let $r(x)=d(x_0, x)$ denote the distance function on $M^n$
from a fixed base point $x_0$. Suppose that
$$ Ric \le (n-1)K$$ on the geodesic ball $B(x_0, r_0)$ for some
constants $r_0>0$ and $K>0$. Then, for any $x\in M^n\setminus B(x_0, r_0)$, we have
$$\Delta_f r(x) \le (n-1)\left(\frac 23 Kr_0 + r_0^{-1}\right).$$
\end{lemma}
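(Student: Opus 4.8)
The plan is to run the classical second–variation (index form) estimate for the distance Laplacian, and then use the steady soliton equation $(1.1)$ to trade Ricci terms along a minimizing geodesic for second derivatives of $f$; this is the technique of Perelman and Cao--Zhou adapted to the steady case. Fix $x\in M^n\setminus B(x_0,r_0)$ and first assume $x$ is not in the cut locus of $x_0$. Let $\gamma:[0,r]\to M$ be a unit--speed minimizing geodesic from $x_0$ to $x$, $r=r(x)$, pick a parallel orthonormal frame $e_1,\dots,e_{n-1},e_n=\gamma'$ along $\gamma$, and let $\varphi:[0,r]\to[0,1]$ be Perelman's cut--off, $\varphi(s)=s/r_0$ for $0\le s\le r_0$ and $\varphi(s)=1$ for $r_0\le s\le r$. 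Testing the index form with the fields $\varphi e_i$ gives the standard inequality
$$\Delta r(x)\ \le\ \int_0^r\Big((n-1)\,\varphi'(s)^2-\varphi(s)^2\,Ric(\gamma',\gamma')\Big)\,ds.$$

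Next I would use that along the geodesic $\gamma$ the soliton equation reads $Ric(\gamma',\gamma')=-\tfrac{d^2}{ds^2}(f\circ\gamma)$. On the outer piece $[r_0,r]$, where $\varphi\equiv 1$, this lets me integrate $-Ric(\gamma',\gamma')=\tfrac{d^2}{ds^2}(f\circ\gamma)$ directly, producing the endpoint values of $\tfrac{d}{ds}(f\circ\gamma)$ at $s=r$ and $s=r_0$; the value at $s=r$ is exactly $\langle\nabla f,\nabla r\rangle(x)$ and cancels when we pass from $\Delta r$ to $\Delta_f r=\Delta r-\langle\nabla f,\nabla r\rangle$. Rewriting the remaining endpoint term at $s=r_0$ as an integral over $[0,r_0]$ via the same identity, everything reorganizes into
$$\Delta_f r(x)\ \le\ \frac{n-1}{r_0}\ +\ \int_0^{r_0}\Big(1-\frac{s^2}{r_0^2}\Big)Ric(\gamma',\gamma')\,ds\ -\ \langle\nabla f,\gamma'(0)\rangle.$$

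Now the coefficient $1-s^2/r_0^2$ is nonnegative on $[0,r_0]$ and $\gamma([0,r_0])\subset B(x_0,r_0)$, so the hypothesis $Ric\le(n-1)K$ there yields $\int_0^{r_0}(1-s^2/r_0^2)Ric(\gamma',\gamma')\,ds\le(n-1)K\int_0^{r_0}(1-s^2/r_0^2)\,ds=(n-1)K\cdot\tfrac{2}{3}r_0$, which together with the term $\tfrac{n-1}{r_0}$ gives the asserted bound; the residual endpoint contribution $-\langle\nabla f,\gamma'(0)\rangle$ is harmless because $|\nabla f|^2=1-R\le 1$ by $(2.4)$ (and it is absorbed, or is simply zero when $x_0$ is taken to be a critical point of $f$). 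Finally, for $x$ lying in the cut locus of $x_0$ I would use Calabi's trick: replace $x_0$ by a point $\tilde x_0$ slightly along a minimizing geodesic to $x$ so that $x$ is not a cut point of $\tilde x_0$, apply the above to $d(\tilde x_0,\cdot)$, and let $\tilde x_0\to x_0$; this gives the inequality in the barrier (hence distributional) sense everywhere, which suffices for the applications. The only genuinely delicate point is this cut--locus bookkeeping combined with tracking the integration--by--parts boundary terms so that the $\nabla f$ contributions cancel correctly; once that is done the curvature estimate is immediate from the hypothesis.
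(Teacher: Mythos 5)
The paper itself offers no proof of Lemma 4.1: it is quoted as a special case of Perelman's distance-Laplacian estimate for Ricci flow (Remark 4.1 points to Lemma 3.4.1 of Cao--Zhu), using that a steady soliton is a self-similar solution. Your argument is, in effect, the standard proof of that cited lemma carried out directly on the soliton: the index-form inequality with Perelman's cut-off $\varphi(s)=s/r_0$, the substitution $Ric(\gamma',\gamma')=-\frac{d^2}{ds^2}(f\circ\gamma)$ from (1.1), the cancellation of $\langle\nabla f,\gamma'(r)\rangle$ when passing to $\Delta_f r$, the estimate $\int_0^{r_0}\bigl(1-s^2/r_0^2\bigr)Ric(\gamma',\gamma')\,ds\le \frac{2}{3}(n-1)Kr_0$, and the Calabi trick at the cut locus are all correct, so as a self-contained replacement for the citation your route is fine and in fact more informative.

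The one step you cannot wave away is the leftover endpoint term $-\langle\nabla f(x_0),\gamma'(0)\rangle$. It is not ``absorbed'' (the stated constant leaves no room for it), and you may not take $x_0$ to be a critical point of $f$: in the lemma and in its applications in Section 4 the base point is arbitrary, and under the hypotheses there ($\lim_{x\to\infty}R=0$, with no assumption that $R$ attains its maximum) $f$ need not have any critical point at all. The term is genuinely present: on flat $\mathbb{R}^n$ with linear potential, a steady soliton satisfying (1.1) and (2.4), one has $\Delta_f r=\frac{n-1}{r}-\langle\nabla f,\nabla r\rangle=\frac{n-1}{r}+1$ along the ray opposite to $\nabla f$, which violates the clean bound once $Kr_0$ is small and $r_0$ is large. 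What your computation honestly proves is
$$\Delta_f r(x)\le (n-1)\left(\tfrac{2}{3}Kr_0+r_0^{-1}\right)+|\nabla f|(x_0)\le (n-1)\left(\tfrac{2}{3}Kr_0+r_0^{-1}\right)+1$$
under the normalization (2.4); the same additive term appears when one translates Perelman's inequality $\partial_t d-\Delta d\ge -(n-1)\left(\frac{2}{3}Kr_0+r_0^{-1}\right)$ to the soliton via $\partial_t d=\langle\nabla f,\gamma'(r)\rangle-\langle\nabla f(x_0),\gamma'(0)\rangle$, so this is really an imprecision in the lemma's statement rather than a defect peculiar to your argument. Since every use of Lemma 4.1 in the paper (the bound (4.5) and Case 3 of Theorem 4.2) only needs $\Delta_f r\le C(K,r_0)$, the weaker inequality with the extra additive constant suffices; state it that way rather than claiming the endpoint term vanishes or can be absorbed.
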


\begin{remark} Lemma 4.1 is a special case of a more general result valid for solutions to the Ricci flow
due to Perelman \cite{P1}, see, e.g., Lemma 3.4.1 in \cite{CaoZhu}.  Also see \cite{FLZ} and \cite{Wei} for a different version.
\end{remark}

\medskip

\begin{theorem} Let $(M^4, g_{ij}, f)$, which is not Ricci-flat,  be a complete noncompact gradient steady Ricci soliton. If $\lim_{x\to \infty} R(x)=0$, then, for each $0<a<1$, there exists
a constant $C>0$ such that
$$ \sup_{x\in M} |Ric|^2\le C {R^a} \qquad  \mbox{and} \qquad \sup_{x\in M} |Rm| \le C.$$
\end{theorem}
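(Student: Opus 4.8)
The plan is to reduce Theorem~4.1 to a single differential inequality for the quotient $\psi=|Ric|^2/R^a$ and then close the estimate by a cut‑off maximum principle, in the spirit of Section~3.

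Set‑up. Since $(M^4,g,f)$ is not Ricci‑flat, Lemma~2.2 and Remark~2.1 give $R>0$ everywhere, while the normalization (2.4) gives $0<R\le1$ and $|\nabla f|^2=1-R$; because $R(x)\to0$ we get $|\nabla f|^2\to1$, hence $|\nabla f|^2\ge\tfrac12$ outside a compact set $K_0$, and (as is standard for steady solitons whose scalar curvature does not attain its supremum at infinity) $F=-f$ is a proper exhaustion function. Feeding $|\nabla f|^2\ge\tfrac12$ into the Munteanu--Wang bound (Lemma~3.1) gives, exactly as in Proposition~3.1,
$$|Rm|\ \le\ C\bigl(|\nabla Ric|+|Ric|^2+|Ric|\bigr)\qquad\text{on }M\setminus K_0.$$
We shall also use $\nabla R=2Ric(\nabla f)$ from (2.2) and the Cauchy--Schwarz bound $|\nabla|Ric|^2|\le2|Ric|\,|\nabla Ric|$.

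The Ricci estimate. Fix $0<a<1$ and set $\psi=|Ric|^2R^{-a}$, which is smooth since $R>0$. Using the product rule for $\Delta_f$, Lemma~2.4 for $\Delta_f|Ric|^2$, and $\Delta_fR=-2|Ric|^2$ for $\Delta_fR^{-a}$, a direct computation gives
\begin{align*}
\Delta_f\psi\ \ge\ & 2R^{-a}|\nabla Ric|^2+a(a+1)R^{-a-2}|Ric|^2|\nabla R|^2+2aR^{a-1}\psi^2\\
&-4R^{-a}|Rm|\,|Ric|^2-2aR^{-a-1}\,\nabla|Ric|^2\!\cdot\!\nabla R,
\end{align*}
the dominant positive term arising from $2aR^{-a-1}|Ric|^4=2aR^{a-1}\psi^2$. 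I would then absorb both negative terms into the three positive ones by Young's inequality. Using the curvature bound above, the $|\nabla Ric|$‑part of the first negative term costs an arbitrarily small fraction of $R^{-a}|\nabla Ric|^2$ plus a multiple of $R^{-a}|Ric|^4=R^a\psi^2$, while its remaining parts produce multiples of $R^a\psi^2$ and of $R^{(3-a)/4}(R^{a-1}\psi^2)^{3/4}$; since $0<R\le1$ gives $R^a\le R\cdot R^{a-1}$, each of these is $\le\tfrac a2R^{a-1}\psi^2+C$ once $R$ is small, i.e.\ outside a compact set. For the cross term, $\bigl|2aR^{-a-1}\nabla|Ric|^2\cdot\nabla R\bigr|\le4aR^{-a-1}|Ric|\,|\nabla Ric|\,|\nabla R|$, and Young's inequality splits it as $2a\alpha R^{-a}|\nabla Ric|^2+2a\alpha^{-1}R^{-a-2}|Ric|^2|\nabla R|^2$, which is absorbed by $2R^{-a}|\nabla Ric|^2+a(a+1)R^{-a-2}|Ric|^2|\nabla R|^2$ as soon as $\tfrac2{a+1}\le\alpha<\tfrac1a$ — possible precisely because $a<1$. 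Using $R^{a-1}\ge1$, the net result is
$$\Delta_f\psi\ \ge\ a\,\psi^2-C\qquad\text{on }M\setminus K$$
for some compact $K$ and constant $C=C(a)>0$.

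Conclusion and the curvature bound. Take a cut‑off $\varphi=\varphi(F)$ with $t^2(|\varphi'|^2+|\varphi''|)\le c$, $\varphi\equiv1$ on $D(\rho)$ and $\varphi\equiv0$ off $D(2\rho)$ (both compact since $F$ is proper); since $\Delta_fF=1$ and $|\nabla F|^2\le1$, the $\Delta_f$‑derivatives of $\varphi$ are $O(1/\rho)$ as in (3.8). Running the maximum principle on $G=\varphi^2\psi$ exactly as in the proof of Theorem~3.1 — at a maximum point of $G$ either the point lies in $K$, where $G\le\psi$ is bounded a priori, or $G^2\le CG+C$ holds there — gives $G\le C$ with $C$ independent of $\rho$, hence $\psi\le C$ on $M$, that is $|Ric|^2\le CR^a$. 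In particular, since $R\le1$, this yields $|Ric|^2\le C$, so $|Ric|$ is bounded; with $|Ric|$ bounded and $|\nabla f|^2\ge\tfrac12$ outside a compact set the proof of Theorem~3.1 applies verbatim (those are its only two inputs: the hypothesis $Ric>0$ there entered only through $|Ric|\le R\le1$ and through Proposition~3.1), giving $\sup_M|Rm|\le C$. I expect the main obstacle to be the bookkeeping in the absorption step — in particular fitting the cross term $-2aR^{-a-1}\nabla|Ric|^2\cdot\nabla R$ into $2R^{-a}|\nabla Ric|^2$ together with the positive term $a(a+1)R^{-a-2}|Ric|^2|\nabla R|^2$, which is exactly what forces $a<1$ — together with the (standard) verification that $F$ is a proper exhaustion, needed for the cut‑off machinery; alternatively one could run the last step on geodesic balls using the Laplacian comparison of Lemma~4.1.
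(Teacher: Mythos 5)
Your analytic core coincides with the paper's: you work with the same quotient $\psi=|Ric|^2R^{-a}$, use $\Delta_f|Ric|^2\ge 2|\nabla Ric|^2-4|Rm|\,|Ric|^2$, $\Delta_fR=-2|Ric|^2$, the Kato/Cauchy absorption of the cross term $-2aR^{-a-1}\nabla|Ric|^2\cdot\nabla R$ (which is exactly where $a<1$ enters, just as in the paper's derivation of (4.2)), and the Munteanu--Wang bound of Lemma 3.1 combined with $|\nabla f|\ge c_1>0$ outside a compact set, which is available here because $R\to0$ and (2.4). Your inequality $\Delta_f\psi\ge a\psi^2-C$ outside a compact set is equivalent to the paper's (4.2) together with the choice (4.3), so that part is fine.

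The genuine gap is the cut-off step. You assert parenthetically that $F=-f$ is a proper exhaustion ``as is standard,'' and you build the whole maximum-principle argument on the sublevel sets $D(\rho)$, which must be compact for $G=\varphi^2\psi$ to attain its maximum. Properness of the potential is not justified under the sole hypothesis $\lim_{x\to\infty}R=0$: knowing $|\nabla F|^2=1-R\ge\tfrac12$ outside a compact set bounds $F$ from above by $r(x)+C$ but gives no lower bound (a gradient bounded below in norm does not prevent unbounded level sets), and the known lower bound (2.7) requires $Ric>0$ and $R$ attaining its maximum. The paper is explicit on this point: its proof of Theorem 4.1 states that one must ``use the distance function to cut-off rather than the potential function since the potential function may not be proper,'' and this is precisely why Lemma 4.1 (the upper bound on $\Delta_fr$) is included; the paper's cut-off is $\varphi(r(x))$ supported in the annulus $B(x_0,2D_0)\setminus B(x_0,d_0)$, with (4.5) replacing your (3.8)-type estimates. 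You do mention the geodesic-ball alternative at the very end, but you do not carry it out, and the same unproved properness also undermines your claim that the bound $\sup_M|Rm|\le C$ follows from the proof of Theorem 3.1 ``verbatim'': that proof uses the cut-off $\varphi(F)$, legitimate there only because Proposition 2.2 gives properness from $Ric>0$ plus $R$ attaining its maximum. To close your argument you need to rerun both maximum-principle steps with $\varphi=\varphi(r(x))$, using Lemma 4.1 (and a support kept away from the base point, plus the usual Calabi-type remark for the nonsmoothness of $r$) to control $\Delta_f\varphi$, exactly as the paper does.
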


\begin{proof} The proof is similar to that of Munteanu-Wang
\cite{MW} except we need to use  the distance function to cut-off rather than the
potential function since the potential function may not be proper.

Since $\lim_{x\to \infty} R(x)=0$, it follows from (2.4) that
$$|\nabla f|\ge c_1>0$$ for some $0<c_1<1$
outside a compact set. By Lemma 2.4 and  Lemma 3.1,  we have
\begin{eqnarray*}
\Delta_f|Ric|^2&\geq&2|\nabla Ric|^2-C|Rm||Ric|^2\\
&\geq&2|\nabla Ric|^2-C\left(|\nabla Ric| +|Ric|^2+|Ric|\right)|Ric|^2.
\end{eqnarray*}

Also, since  $R>0$ on $M^4$, by using the first identity in Lemma 2.3 we have

\begin{eqnarray*}
\Delta_f \left(\frac{1}{R^a}\right) &=&2a\frac{|Ric|^2} {R^{a+1}}+a(a+1)\frac{|\nabla R|^2}{R^{a+2}}.
\end{eqnarray*}

Hence,
\begin{eqnarray*}
\Delta_f \left(\frac{|Ric|^2}{R^{a}}\right)&=& \frac{\Delta_{f}|Ric|^2}{R^{a}}+|Ric|^2\Delta_{f}\left(\frac{1}{R^{a}}\right)+2\nabla|Ric|^2\cdot\nabla\left(\frac{1}{R^{a}}\right)\\
&\geq& \frac{2|\nabla Ric|^2}{R^{a}}-C\frac{\left(|\nabla Ric|+|Ric|^2+|Ric|\right)|Ric|^2}{R^{a}}\\
& &+|Ric|^2 \left[2a\frac{|Ric|^2} {R^{a+1}}+a(a+1)\frac{|\nabla R|^2}{R^{a+2}}\right]-4a\frac{|Ric|\left|\nabla |Ric|\right||\nabla R|}{R^{a+1}}
\end{eqnarray*}

Apply Cauchy's inequality to the last term
\begin{eqnarray*}
-4a\frac{|Ric|\left|\nabla |Ric|\right||\nabla R|}{R^{a+1}} &\ge& -4a\frac{|Ric|\left|\nabla Ric\right||\nabla R|}{R^{a+1}}\\
&\ge & -a(a+1) \frac{|Ric|^2|\nabla R|^2}{R^{a+2} } - \frac{4a}{a+1} \frac{|\nabla Ric|^2}{R^{a}}.
\end{eqnarray*}

Thus, we have
\begin{eqnarray*}
\Delta_f \left(|Ric|^2{R^{-a}}\right)
&\geq&\frac{2(1-a)} {1+a} \frac{|\nabla Ric|^2}{R^{a}}- C\frac{|\nabla Ric||Ric|^2}{R^a} \\
& & - C \frac{|Ric|^4+|Ric|^3}{R^a } +2a \frac{|Ric|^4}{R^{a+1}} \\
&\geq& (2a-\frac{CR}{1-a})\frac{|Ric|^4}{R^{a+1}}-C\frac{|Ric|^3}{R^a}.
\end{eqnarray*}

Therefore, for $u=\frac{|Ric|^2}{R^a}$, we have derived the differential inequality
$$\Delta_f u\ge  (2a-\frac{CR}{1-a}) u^2 R^{a-1} -C u^{3/2} R^{a/2}. \eqno (4.2)$$

By assumption (4.1), for any $0<a<1$, we can choose a fixed $d_0>0$ depending on $a$ and sufficiently large  so that
$$ (2a-\frac{CR}{1-a}) \ge a \eqno(4.3)$$
outside the geodesic ball $B(x_0, d_0)$.

Next, for any $D_0>2d_0$, we choose a function $\varphi (t)$ as follows:
$0\le \varphi (t)\le 1$ is a smooth  function on $\mathbb {R}$ such that
$$ {\varphi (t) =\left\{
       \begin{array}{ll}
  1, \ \ \quad  2d_0\le t\le D_0,\\[4mm]
    0, \ \ \quad  t\le d_{0} \ \mbox{or} \ t\ge 2D_0.
       \end{array}
    \right.}$$
Also,
$$ t^2 |\varphi''(t)|\le c \qquad \mbox{and} \qquad 0\ge \varphi'(t) \ge -\frac {c} {D_0} , \ \mbox {if} \ 2d_0\le t \le 2D_0. \eqno(4.4)$$

Now we  use $\varphi=\varphi (r(x))$ as a cut-off function whose support is in $B(x_0, 2D_0)\setminus B(x_0, d_0)$. Note that, by (4.4) and Lemma 4.1,  we get
$$ |\nabla \varphi|^2=|\varphi'|^{2}\le  \frac {c} {D_0^2} \quad  \mbox{and} \quad \Delta_f \varphi =\varphi' \Delta_f r(x) +\varphi''\ge -\frac{C}{D_0} . \eqno(4.5)$$
on $B(x_0, 2D_0)\setminus B(x_0, 2d_0)$ respectively.

Setting $G=\varphi^2 u$, then by our choice of $\varphi$ and (4.5), we see that
\begin{eqnarray*} \varphi^2 \Delta_f G &=&\varphi^4 \Delta_f u +\varphi^2 u \Delta_f \varphi^2 +2 \varphi^2(\nabla u\cdot\nabla \varphi^2 )\\
&\ge &\varphi^4 \left( a u^2 R^{a-1} -C u^{3/2} R^{a/2}\right) + 2\varphi^2 u (\Delta_f \varphi^2)-8|\nabla \varphi|^2 G +2 \nabla G\cdot\nabla \varphi^2 \\
&\ge & a G^2R^{a-1} -C G^{3/2} R^{a/2} -CG+2\nabla G\cdot\nabla \varphi^2.
\end{eqnarray*}

Assume $G$ achieves its maximum at some point $p\in B(x_0, 2D_0)$.
If $p\in B(x_0, 2D_0)\setminus B(x_0, 2d_0)$, then it follows from the maximum principle that
$$ 0\ge a G^2(p)R^{a-1}(p)  -C G^{3/2}(p) R^{a/2}(p) -CG(p).$$
On the other hand, noticing that the fact $0<a<1$ and $R$ uniformly bounded from above, implies.
$$ G(p)\le C$$ for some constant $C$ depending on $a$ but independent of $D_0$.

Thus,
$$ \max_{B(x_0, D_0)} u \le \max_{B(x_0, 2D_0)} G\le \max\left\{C,\max\limits_{B(2d_0)}u\right\}\le C'$$
for some $C'>0$ indepedent of $D_0$. Therefore $|Ric|^2\le C R^a$ on $M^4$.

It remains to show $|Rm|\le C$ on $M^4$. However, once we know $\sup_{x\in M} Ric\le C$,
$|Rm|\le C$ follows essentially from the same argument as in the proof of Theorem 3.1. We leave
the detail for the interested reader.

\end{proof}

\begin{lemma}
Let $(M^4, g_{ij}, f)$, which is not Ricci-flat,  be a complete noncompact gradient steady Ricci soliton with $\lim_{x\to \infty} R(x)=0$. Then for each $0<a<1$ and $\mu>0$, there exist
constants $\lambda>0$ and $D>0$ so that function $$v=\frac{|Rm|^2+\lambda|Ric|^2}{R^{a}}$$ satisfies the differential inequality
$$\Delta_f v\geq \mu v-D.$$

\end{lemma}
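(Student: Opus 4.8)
The plan is to mimic the derivation of the differential inequality (4.2) for $|Ric|^2/R^a$, but now for $|Rm|^2/R^a$, and then add a large multiple of $|Ric|^2/R^a$ to absorb the bad terms using the curvature bounds $|Rm|\le C$ and $|Ric|^2\le CR^a$ already established in Theorem 4.1. First I would compute $\Delta_f(|Rm|^2/R^a)$ using the product rule together with the third inequality in Lemma 2.4, $\Delta_f|Rm|^2\ge 2|\nabla Rm|^2-C|Rm|^3$, the identity $\Delta_f R=-2|Ric|^2$ from Lemma 2.3, and the formula $\Delta_f(R^{-a})=2a|Ric|^2R^{-a-1}+a(a+1)|\nabla R|^2 R^{-a-2}$ already used in the proof of Theorem 4.1. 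As there, the cross term $2\nabla|Rm|^2\cdot\nabla(R^{-a})=-2aR^{-a-1}\nabla|Rm|^2\cdot\nabla R$ is estimated by Cauchy's inequality: write $|\nabla|Rm|^2|\le 2|Rm||\nabla Rm|$ (or use the Kato inequality $|\nabla|Rm||\le|\nabla Rm|$ as in Remark 2.5) and split it so that a fraction $\tfrac{2a}{a+1}R^{-a}|\nabla Rm|^2$ is subtracted off against the good gradient term $2R^{-a}|\nabla Rm|^2$, leaving a positive multiple of $|\nabla Rm|^2/R^a$, while the remaining piece is absorbed into $a(a+1)|Rm|^2|\nabla R|^2 R^{-a-2}$, which cancels against the corresponding term coming from $\Delta_f(R^{-a})$.

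After these cancellations one is left, schematically, with
$$\Delta_f\!\left(\frac{|Rm|^2}{R^a}\right)\ \ge\ 2a\,\frac{|Rm|^2|Ric|^2}{R^{a+1}}\ -\ C\,\frac{|Rm|^3}{R^a}\ -\ C\,\frac{|Rm|^2|Ric|^2}{R^{a+1}}\cdot R ,$$
where the last term records what is left of the $|\nabla R|^2$ terms (recall $|\nabla R|^2=|\nabla R|^2\le 4|Ric|^2|\nabla f|^2\le 4|Ric|^2$ by (2.2) and (3.2), which is $\le CR^a$ by Theorem 4.1). Since $\lim_{x\to\infty}R(x)=0$, outside a large ball $B(x_0,D)$ we have $2a-CR\ge a$, so the leading term dominates; and on the compact remainder all quantities are bounded. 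Now I add $\lambda\, |Ric|^2/R^a$ and invoke (4.2): $\Delta_f(|Ric|^2/R^a)\ge a\,(|Ric|^2)^2 R^{-a-1}-C(|Ric|^2)^{3/2}R^{-a/2}$ outside a large ball. Using $|Rm|\le C$ from Theorem 4.1, the bad term $-C|Rm|^3/R^a$ is bounded by $-C'|Rm|^2/R^a$, which for $\lambda$ large is controlled by a small constant times $\lambda |Ric|^2$ plus a constant — here one also uses $|Ric|^2\le CR^a$, i.e. $|Ric|^2/R^a\le C$, to convert $-C|Rm|^2/R^a$ into $-D$ after the maximum-principle-ready form is reached. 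Choosing $\lambda$ large enough (depending on $\mu$ and the constant $C$ from Theorem 4.1) and $D$ accordingly yields $\Delta_f v\ge \mu v-D$ everywhere on $M^4$.

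The main obstacle I anticipate is bookkeeping the gradient terms so that the sign works out: one must check that the fraction of $|\nabla Rm|^2/R^a$ retained after Cauchy's inequality is genuinely positive (which forces the split to use a weight between $\tfrac{2a}{a+1}$ and $2$, exactly as in the $|Ric|^2$ computation), and that the leftover $|\nabla R|^2$-type terms really cancel rather than merely being dominated. A secondary point is that the "good" term $2a|Rm|^2|Ric|^2 R^{-a-1}$ is of the form $v\cdot(|Ric|^2 R^{-1})$ rather than $v^2$, so unlike Proposition 3.2 one cannot hope for a $v^2$ right-hand side; instead one must be content with the weaker $\mu v-D$, which is all that is claimed, and which follows because $|Ric|^2\le CR^a$ gives $|Ric|^2R^{-1}\ge (|Ric|^2)^{?}$... rather, because $v$ itself is what multiplies, and the $-C|Rm|^3/R^a$ term is bounded by a constant times $v$ (using $|Rm|\le C$) — so the inequality degenerates into the linear form stated. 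Making this last reduction precise, and pinning down where (4.1) is used to pass from "outside a compact set" to "on all of $M^4$ with a possibly larger $D$", is the part requiring care.
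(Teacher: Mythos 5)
There is a genuine gap: your argument never manufactures the positive term $+\mu v$ on the right-hand side, which is the whole content of the lemma. In your scheme the only positive terms surviving are $2a\,|Rm|^2|Ric|^2R^{-a-1}$ (and the analogous $a\lambda|Ric|^4R^{-a-1}$ from (4.2)), whose coefficient relative to $v$ is essentially $|Ric|^2/R$; this has no uniform positive lower bound (one only knows $|Ric|^2\ge R^2/4$, i.e.\ a coefficient of order $R\to 0$), so it cannot dominate the genuinely dangerous term $-C|Rm|^3R^{-a}\ge -C'|Rm|^2R^{-a}$, which is a fixed \emph{negative} multiple of part of $v$. Your attempted fix --- ``$-C|Rm|^2/R^a$ is controlled by a small constant times $\lambda|Ric|^2$ plus a constant'' and ``use $|Ric|^2\le CR^a$ to convert $-C|Rm|^2/R^a$ into $-D$'' --- is unjustified and in fact circular: Theorem 4.1 bounds $|Ric|^2R^{-a}$, not $|Rm|^2R^{-a}$; boundedness of $|Rm|^2R^{-a}$ is precisely what Theorem 4.2 (via this lemma) is trying to prove. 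From your estimates one only gets $\Delta_f v\ge -Cv-D$, which is useless for the subsequent maximum-principle argument.

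The missing idea, and the route the paper takes, is to \emph{not} invoke (4.2) (which has already spent the gradient term $|\nabla Ric|^2$ absorbing $|\nabla Ric|\,|Ric|^2$), but to redo the computation for $v=(|Rm|^2+\lambda|Ric|^2)R^{-a}$ keeping the term $\frac{2\lambda(1-a)}{1+a}\frac{|\nabla Ric|^2}{R^a}$ after the Cauchy--Schwarz absorption of the $|\nabla R|$ cross terms, and then to convert it into curvature via the Munteanu--Wang pointwise estimate (Lemma 3.1 together with $|\nabla f|\ge c_1>0$ outside a compact set, i.e.\ Proposition 3.1): $|\nabla Ric|^2\ge \epsilon|Rm|^2-(|Ric|^2+|Ric|)^2$. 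This makes the coefficient of $|Rm|^2R^{-a}$ equal to $\frac{2\epsilon\lambda(1-a)}{1+a}-c$ with $c$ independent of $\lambda$ (here $|Rm|\le C$ from Theorem 4.1 is used to reduce $|Rm|^3$ to $|Rm|^2$), so it can be made larger than any prescribed $\mu$ by choosing $\lambda$ large; the leftover terms are all multiples of $|Ric|^2R^{-a}\le C$ and of bounded quantities, hence are swallowed into the constant $-D$ (enlarging $D$ to cover the compact set where Proposition 3.1 may fail). Without this use of Lemma 3.1/Proposition 3.1 --- which your proposal omits entirely for this step --- the inequality $\Delta_f v\ge\mu v-D$ cannot be reached.
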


\begin{proof} By Lemma 2.4 and Theorem 4.1,
\begin{eqnarray*}
\Delta_f v&=&\frac{\Delta_f(|Rm|^2+\lambda|Ric|^2)}{R^a}+vR^a\Delta_f(\frac{1}{R^a})+2\nabla(vR^a)\cdot\nabla(R^{-a})\\
&\geq&\frac{2|\nabla Rm|^2+2\lambda|\nabla Ric|^2}{R^a}-c \frac{|Rm|^2+\lambda|Ric|^2}{R^a}\\
& &+(|Rm|^2+\lambda|Ric|^2)\left[-a\frac{\triangle_f R}{R^{a+1}}+a(a+1)\frac{|\nabla R|^2}{R^{a+2}}\right]\\
& &-4a\frac{|Rm||\nabla Rm||\nabla R|}{R^{a+1}}-4a\lambda\frac{|Ric||\nabla Ric||\nabla R|}{R^{a+1}}.\\
\end{eqnarray*}
By using Cauchy's inequality to terms with $|\nabla R|$,

\begin{eqnarray*}
\Delta_f v&\geq&\frac{2|\nabla Rm|^2+2\lambda|\nabla Ric|^2}{R^a}-c \frac{|Rm|^2+\lambda|Ric|^2}{R^a}\\
& &-\frac{4a}{a+1}\frac{|\nabla Rm|^2}{R^a}-\frac{4a\lambda}{a+1}\frac{|\nabla Ric|^2}{R^a}\\
&\geq& \frac{2\lambda(1-a)}{1+a}\frac{|\nabla Ric|^2}{R^a}-c \frac{|Rm|^2+\lambda|Ric|^2}{R^a}.
\end{eqnarray*}

Now by Proposition 3.1, for some constant $\epsilon>0$, we have
\begin{eqnarray*}
2\epsilon|Rm|^2&\leq&\left(|\nabla Ric|+|Ric|^2+|Rc|\right)^2\\
&\leq& 2 |\nabla Ric|^2+2(|Ric|^2+|Ric|)^2.\\
\end{eqnarray*}

Thus,
\begin{eqnarray*}
\Delta_f v&\geq& \left[\frac{2\epsilon\lambda(1-a)}{1+a}-c\right]\frac{|Rm|^2}{R^a}-\left[2\lambda\frac{1-a}{1+a}(|Ric|+1)^2+c\lambda\right]\frac{|Ric|^2}{R^a}\\
&\ge&\left[\epsilon\lambda(1-a)-c\right](v-\lambda\frac{|Ric|^2}{R^a})-\lambda \left[2(1-a)(|Ric|+1)^2+c\right]\frac{|Ric|^2}{R^a}.
\end{eqnarray*}
Therefore, by Theorem 4.1, for each $0<a<1$ and $\mu>0$ one can choose $\lambda\ge C/(1-a)$, with $C>0$ depending on $\mu$ and sufficiently large, so that
\begin{equation*}
\Delta_f v \ge \mu v-D
\end{equation*}
for some constant $D>0$ depending on $\lambda$.
\end{proof}


\begin{theorem}
Let $(M^4,g_{ij},f)$, which is not Ricci-flat,  be a complete noncompact gradient steady Ricci soliton with $\lim_{r \to \ \infty} R=0$. Suppose R has at most polynomial decay, i.e., $R(x)\ge C/r^k(x)$ outside $B(r_0)$
for some fixed $r_0>1$, some constant $c>0$ and positive integer $k$. Then, for each $0<a<1$, there exists a constant $C$ such that
$$|Rm|\le C R^{a/2}.$$
\end{theorem}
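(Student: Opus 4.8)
The plan is to combine the differential inequality for $v = (|Rm|^2 + \lambda|Ric|^2)/R^a$ from Lemma 4.2 with a cutoff argument based on the distance function, exactly in the spirit of the proof of Theorem 4.1. First I would invoke Lemma 4.2 with, say, $\mu = 1$ (any fixed positive number works) to obtain constants $\lambda > 0$ and $D > 0$ with $\Delta_f v \ge v - D$ on $M^4$. The goal is then to show $v$ is bounded, since $v \le C$ is equivalent to $|Rm|^2 \le C' R^a$ (using $|Ric|^2 \le |Rm|^2$ and $R$ bounded above, or directly $|Rm|^2 \le v R^a \le C R^a$).

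Next I would set up the cutoff. Since the potential function need not be proper, I use $\varphi = \varphi(r(x))$ where $r(x) = d(x_0,x)$ and $\varphi$ is the same type of plateau function as in Theorem 4.1: $0 \le \varphi \le 1$, $\varphi \equiv 1$ on $[2d_0, D_0]$, $\varphi \equiv 0$ outside $[d_0, 2D_0]$, with $t^2|\varphi''| \le c$ and $0 \ge \varphi' \ge -c/D_0$ on $[2d_0, 2D_0]$. Here $d_0$ is chosen (depending on $a$) large enough that the scalar curvature is small outside $B(x_0,d_0)$ — this is where (4.1) and the choice made in Lemma 4.2 enter. By Lemma 4.1 (the Laplacian comparison, applicable since $Ric$ is bounded on compact sets) we get $|\nabla\varphi|^2 \le c/D_0^2$ and $\Delta_f\varphi \ge -C/D_0$ on the annular region. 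Setting $G = \varphi^2 v$, the standard computation
\[
\varphi^2 \Delta_f G = \varphi^4\Delta_f v + \varphi^2 v\,\Delta_f(\varphi^2) + 2\varphi^2\nabla v\cdot\nabla(\varphi^2)
\]
together with $\Delta_f v \ge v - D$, the bounds on $\varphi$, and absorbing the gradient term into $\nabla G$ yields, at an interior maximum point $p$ of $G$,
\[
0 \ge G^2(p) - C G(p) - D',
\]
where I have used $\varphi^4 v^2 = G^2$ on the region $\{\varphi = 1\}$ and $C, D'$ are independent of $D_0$; this forces $G(p) \le C''$.

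Then $\max_{B(x_0,D_0)} v \le \max_{B(x_0,2D_0)} G \le \max\{C'', \max_{B(x_0,2d_0)} v\}$, and since $D_0$ is arbitrary this gives $\sup_M v \le C$, hence $|Rm|^2 \le C R^a$ as claimed. The role of the polynomial decay hypothesis $R(x) \ge C/r^k(x)$ is subtle: unlike in Theorem 4.1, the differential inequality from Lemma 4.2 already has the clean form $\Delta_f v \ge v - D$ with no negative power of $R$ multiplying $v$, so the maximum principle step above does not actually need a lower bound on $R$. I expect the polynomial-decay assumption is needed only to upgrade the conclusion to the pointwise statement $|Rm| \le C R^{a/2}$ \emph{with the exponent $a$ arbitrary in $(0,1)$} — i.e., one first proves $|Rm|^2 \le CR^{a'}$ for some $a' \in (0,1)$ using the above, and then a bootstrap (iterating Lemma 4.2 or re-running the argument with the improved bound on $|Rm|$ feeding back through Lemma 3.1) trades the polynomial decay rate for improvement of the exponent up to any $a < 1$. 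The main obstacle, and the only place genuine care is required, is making this bootstrap precise: tracking how the constant $\lambda = \lambda(a,\mu)$ and $D = D(\lambda)$ from Lemma 4.2 blow up as $a \to 1$, and verifying that finitely many iterations (using that $R \ge C/r^k$ controls the relevant error terms) suffice to reach any prescribed $a \in (0,1)$.
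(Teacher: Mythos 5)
There is a genuine gap, and it sits exactly at the maximum principle step. Lemma 4.2 gives only the \emph{linear} inequality $\Delta_f v \ge \mu v - D$; it contains no $v^2$ term. So at an interior maximum point $p$ of $G=\varphi^2 v$ the cutoff computation yields only
$$0 \;\ge\; \mu\,\varphi^2(p)\,G(p) \;-\; D\,\varphi^4(p)\;+\;\bigl(2\varphi\Delta_f\varphi-6|\nabla\varphi|^2\bigr)(p)\,G(p),$$
not the quadratic inequality $0\ge G^2(p)-CG(p)-D'$ that you claim (your justification ``$\varphi^4v^2=G^2$ on $\{\varphi=1\}$'' is doubly unavailable: there is no $v^2$ in the differential inequality, and the maximum point need not lie in the plateau region). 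With a plateau cutoff vanishing near $\partial B(x_0,2D_0)$, the point $p$ can sit where $\varphi(p)$ is arbitrarily small, and then the good term $\mu\varphi^2(p)G(p)$ cannot absorb the bad term of size $(C/D_0)G(p)$; no bound on $G(p)$ independent of $D_0$ follows. This is in contrast to Theorem 4.1, where the inequality for $u=|Ric|^2R^{-a}$ really is quadratic ($aG^2R^{a-1}\ge aG^2$ dominates) and the plateau cutoff suffices.

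Consequently your diagnosis of the role of the polynomial decay is off. The decay $R\ge c/r^k$ is not a device for bootstrapping the exponent $a$ (Lemma 4.2 already holds for every $0<a<1$; no iteration occurs in the paper); it is what saves the maximum principle itself. Note that $v=(|Rm|^2+\lambda|Ric|^2)R^{-a}$ is a priori unbounded (by Theorem 4.1 one only knows $v\le CR^{-a}$, and $R\to 0$), so one must control $v$ at a maximum point of $G$ that the linear inequality only forces to lie within a bounded distance of $\partial B(x_0,d)$. The paper's proof does this by choosing the power-law cutoff $\varphi=\bigl(\frac{d-r}{d}\bigr)^{p}$ with $p=k/2$: the maximum principle plus Lemma 4.1 give $d-r(q)\le c$ at the max point $q$, and then $\varphi^2(q)\le c^{k}d^{-k}$ beats $v(q)\le CR^{-a}(q)\le Cr^{ak}(q)\le Cd^{ak}$ precisely because $a<1$, giving $G(q)\le Cd^{(a-1)k}\le C$. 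Your plateau cutoff has no mechanism to exploit either the localization $d-r(q)\le c$ or the matched vanishing rate, so the argument as proposed does not close; the fix is to replace it by the paper's power cutoff (or otherwise feed the decay hypothesis into the boundary region), not to add a bootstrap in $a$.
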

\begin{proof}

Let $p=\frac{k}{2}$. Consider the following function on $\mathbb{R}^{+}$:
$$ {\varphi (t) =\left\{
       \begin{array}{ll}
  \left(\frac{d-t}{d}\right)^{p} \ \ \quad  0\le t\le d\\[5mm]
   \quad 0 \ \ \quad \quad \quad \quad t\ge d.
       \end{array}
    \right.}$$

Next, let $\varphi=\varphi(r(x))$ on $M^4$. Then we have
\begin{eqnarray*}
|\nabla \varphi| &=&\frac{p}{d}\left(\frac{d-r}{d}\right)^{p-1}|\nabla r|=\frac{p}{d-r}\varphi,\\
\triangle_f \varphi&=&-\frac{p}{d}\left(\frac{d-r}{d}\right)^{p-1}\triangle_f r+\frac{p(p-1)}{d^2}\left(\frac{d-r}{d}\right)^{p-2} |\nabla r|^2\\
&=&  \left[-\frac{p}{d-r} \triangle_f r+\frac{p(p-1)}{(d-r)^2}\right]\varphi
\end{eqnarray*}

Consider $w=v-\frac{D}{\mu}$ with $v=\frac{|Rm|^2+\lambda|Ric|^2}{R^{a}}$, $\mu$ and $D$ as in Lemma 4.2. Then,   $w$ satisfies
\begin{eqnarray*}
\triangle_f w\geq\mu w.
\end{eqnarray*}

Let $G$=$\varphi^{2}w$, then outside $B(r_0)$, we have

\begin{align*}\label{eq:abc}
\triangle_f G&=(\Delta_f\varphi^2)w+\varphi^2\Delta_f w+2(\nabla\varphi^2)\cdot\nabla w\\
&\ge\left(2\varphi\Delta_f\varphi+2|\nabla\varphi|^2\right) w+\mu \varphi^2 w+4\varphi\nabla\varphi \cdot \nabla\frac{G}{\varphi^2}\\
&\geq\left(\mu+\frac{2\triangle_f \varphi}{\varphi}-6\frac{|\nabla \varphi|^2}{\varphi^2}\right)G+\frac{4}{\varphi}\langle\nabla G, \nabla \varphi\rangle.\tag{4.6}
\end{align*}

Recall that $G=0$ outside $B(d)$. Now consider a maximum point $q$ of $G$.

\medskip
{\bf Case 1}. $G(q)\le 0$. Then, $\max\limits_{B(d)} w\leq 0.$

\medskip
{\bf Case 2}. $G(q)>0$ and $q\in B(r_0)$. Then, on $\Omega= B((1-\frac{1}{2^{1/p}})d)$, we have
\begin{eqnarray*}
\max\limits_{\Omega} w &\leq&
\max\limits_{\Omega}\frac{1}{\varphi^{2}}\cdot G(q)\\
&\leq& 4 G(q) \\
&\leq& 4 \max\limits_{B(r_0)} w.
\end{eqnarray*}

\medskip
{\bf Case 3}. $G(q)>0$ and $q\notin B(r_0).$ Then, by (4.6) and Lemma 4.1, at $q$ we have
\begin{eqnarray*}
0&\geq&\mu+2\frac{\triangle_f \varphi}{\varphi}-6\frac{|\nabla \varphi|^2}{\varphi^2}\\
&\geq&\mu-2p K_0 \frac{1}{d-r}-(4p^2+2p)\frac{1}{(d-r)^2}
\end{eqnarray*}
for some constant $K_0>0$ depending on $r_0$ and $\max_{B(r_0)} |Ric|$.
Hence $\frac{1}{d-r(q)}>C$ for some constant $C$ depending on $\mu$, $p=k/2$ and $K_0$. Thus,
we have
$$d-r(q)\leq c \eqno(4.7)$$ for some constant $c>0$ independent of $d$.

Therefore,
\begin{eqnarray*}
\max\limits_{\Omega} w &\leq&
\max\limits_{\Omega}\frac{1}{\varphi^{2}}\cdot G(q)\\
&\leq& 4 G(q) \\
&\leq&4 \frac {(d-r(q))^{2p}}{d^{2p}} \frac{\left(|Rm|^2+\lambda|Ric|^2\right)} {R^a}(q)\\
&\leq&C\frac{r^{ak}(q)}{d^{2p}} \\
&\leq&C d^{(a-1)k} \le C
\end{eqnarray*}
for some constant $C>0$ independent of $d$.
Since $d>r_0$ is arbitrary, we obtain $\sup_{M} w \le C$, and hence $|Rm|^2\le C R^{a}$ on $M^4$ for each $0<a<1$.

\end{proof}

\end{document}